\numberwithin{equation}{section}
\definecolor{violet}{rgb}{0.580,0.,0.827}
\newcommand{\ols}[1]{\mskip.5\thinmuskip\overline{\mskip-.5\thinmuskip {#1} \mskip-.5\thinmuskip}\mskip.5\thinmuskip} % overline short
\newcommand\dD{\mathrm{d}}
\def\eps{\varepsilon }
\newcommand{\J}{{\mathcal J}}
\newcommand{\f}{\frac}
\newcommand{\beq}{\begin{equation}}
\newcommand{\eeq}{\end{equation}}
\newcommand{\beqa}{\begin{eqnarray}}
\newcommand{\eeqa}{\end{eqnarray}}
\newcommand\br{\begin{remark}}
\newcommand\er{\end{remark}}
\newcommand\bp{\begin{pmatrix}}
\newcommand\ep{\end{pmatrix}}
\newcommand{\be}{\begin{equation}}
\newcommand{\ee}{\end{equation}}
\newcommand\ba{\begin{equation}\begin{aligned}}
\newcommand\ea{\end{aligned}\end{equation}}
\newcommand\ds{\displaystyle}
\newcommand{\beg}{\begin{example}}
\newcommand{\eeg}{\end{exaplem}}
\newcommand{\bpr}{\begin{proposition}}
\newcommand{\epr}{\end{proposition}}
\newcommand{\bt}{\begin{theorem}}
\newcommand{\et}{\end{theorem}}
\newcommand{\bc}{\begin{corollary}}
\newcommand{\ec}{\end{corollary}}
\newcommand{\bl}{\begin{lemma}}
\newcommand{\el}{\end{lemma}}
\newcommand{\bd}{\begin{definition}}
\newcommand{\ed}{\end{definition}}
\newcommand{\brs}{\begin{remarks}}
\newcommand{\ers}{\end{remarks}}
\newtheorem{theorem}{Theorem}[section]
\newtheorem{proposition}[theorem]{Proposition}
\newtheorem{corollary}[theorem]{Corollary}
\newtheorem{lemma}[theorem]{Lemma}
\newtheorem{remark}[theorem]{Remark}
\newtheorem{definition}[theorem]{Definition}
\newtheorem{example}[theorem]{Example}
\newcommand{\N}{{\mathbb N}}
\newcommand{\R}{{\mathbb R}}
\newcommand{\T}{{\mathbb T}}
\newcommand\bx{{\bm x}}
\newcommand\bv{{\bm v}}
\newcommand\Div{{\rm div}}
\newcommand\bE{{\mathbf E}}
\newcommand\cA{{\mathcal A}}
\newcommand\cD{{\mathcal D}}
\newcommand\cE{{\mathcal E}}
\newcommand\cH{{\mathcal H}}
\newcommand\cI{{\mathcal I}}
\newcommand\cJ{{\mathcal J}}
\newcommand\cL{{\mathcal L}}
\newcommand\cM{{\mathcal M}}
\newcommand\cR{{\mathcal R}}
\numberwithin{equation}{section}
\numberwithin{figure}{section}
\title[A structure and asymptotic preserving scheme for the
Vlasov-Poisson-Fokker-Planck model]{A structure and asymptotic preserving scheme for the Vlasov-Poisson-Fokker-Planck model} 
\keywords{Hermite spectral method; Vlasov-Poisson-Fokker-Planck; Hypocoercive estimates}
\subjclass[2010]{
  Primary:
  82C40,          % Time-dependent statistical mechanics; Kinetic theory of gases
  Secondary:
  65N08,          % Numerical analysis; Finite volume methods
  65N35           % Numerical analysis; Spectral, collocation and related methods 
}
\begin{document}

\maketitle

\centerline{\scshape Alain Blaustein\footnote{akb7016@psu.edu, Pennsylvania State University, Department of Mathematics and Huck Institutes, State College, PA.} and Francis Filbet\footnote{francis.filbet@math.univ-toulouse.fr, Institut de Math\'ematiques de Toulouse, Universit\'e de Toulouse, France.}}
\medskip

\bigskip

\begin{abstract} 
We propose a numerical method for the
Vlasov-Poisson-Fokker-Planck model written as an hyperbolic system thanks to a spectral decomposition in the basis of Hermite functions with respect to the velocity variable and a structure preserving finite
volume scheme for the space variable. On the one hand, we show that this scheme naturally preserves both
stationary solutions and linearized free-energy estimate. On the other hand, we adapt previous arguments based on
hypocoercivity methods to get quantitative estimates ensuring the exponential relaxation 
to equilibrium of the discrete solution for the linearized
Vlasov-Poisson-Fokker-Planck system, uniformly with respect to both scaling and discretization parameters. Finally, we perform substantial
numerical simulations for the nonlinear system to illustrate the
efficiency of this approach for a large variety of collisional regimes
(plasma echos for weakly collisional regimes and trend to equilibrium
for collisional plasmas) and to highlight its robustness (unconditional stability, asymptotic preserving properties).
\end{abstract}

\tableofcontents

\newpage

\section{Introduction}
\label{sec:1}
\setcounter{equation}{0}
\setcounter{figure}{0}
\setcounter{table}{0}
 
The
Vlasov-Poisson-Fokker-Planck system provides a kinetic description of a gas constituted of
  charged particles, let us say electrons and heavy positive ions,   interacting through a mean electrostatic field:
\begin{equation}
    \label{eq:vpfp3D}
  \left\{
    \begin{array}{l}
\ds\frac{\partial f}{\partial t}\,+\,\bv\cdot\nabla_\bx f
      \,+\, \frac{q}{m}\,\bE\cdot\nabla_\bv f \,=\,
      \frac{1}{\tau}\,\Div_\bv\left(\bv f \,+\, T_0 \,\nabla_\bv f \right)\,,
      \\[1.1em]
      \ds\bE = -\nabla_\bx \Phi\;\;;\;\;-\eps_0\Delta_{\bx}\Phi \,=\, q\,(n- n_i)\;\;;\;\; n =\int_{\R^3} f \dD \bv \,.
      \end{array}\right.
  \end{equation}
  In \eqref{eq:vpfp3D}, $f(t,\bx,\bv)$ is the distribution of electrons over the phase space $(\bx,\bv)\in\T^3\times\R^3$ at time $t\geq0$. Field interactions are taken into account thanks to a coupling between kinetic and Poisson equations (first and second line of \eqref{eq:vpfp3D} respectively). The coupling displays several constants: vacuum permittivity $\eps_0$, elementary charge $q$, mass $m$ of an electron as well as macroscopic densities $n(t,\bx)$ of electrons and $n_i(\bx)$ of ions. Thermodynamic effects are taken into account thanks to a Fokker-Planck operator, on the right-hand side of the kinetic equation, where appears the
  relaxation time $\tau>0$ of electrons due to their collisions with the ionic surrounding bath described by a spatially homogeneous  temperature $T_0>0$.

  To investigate the physical behavior of the solution to  system
  \eqref{eq:vpfp3D},  two important quantities will be considered \cite{FN:22}.  The
  first one is $\eps$ the square root of the ratio between the mass
  of electrons and positive heavy ions, given by
  $$
  \eps \,:=\, \sqrt{\frac{m_e}{m_i}}\,\ll\, 1,
  $$
whereas the second one is $\tau(\eps)>0$, the ratio between the elapsed time
between two collisions of electrons and the observable time. We focus on a situation where the  one dimensional Vlasov-Poisson-Fokker-Planck system can be reformulated using
these parameters as
\begin{equation}
	\label{vpfp:0}
	\left\{
	\begin{array}{l}
		\ds\eps\,\partial_t f \,+\, v\,\partial_x f
		\,-\,\partial_{x}\phi \,\partial_v f \,=\,\frac{\eps}{\tau(\eps)}
		\partial_v \left( v \, f \,+\, T_0\,\partial_v f \right)\,,
		\\[1.1em]
		\ds-\partial_{x}^2\phi \,=\, \rho-\rho_i\,,\quad \rho(t,x)\,=\,\int_{\R}\,f(t,x,v)\,\dD v\,.
	\end{array}\right.
\end{equation}
This system is completed with the following condition, which ensures uniqueness of the electrical potential $\phi$
\begin{equation}
	\label{vpfp:1}
	\int_\T \phi(t,x) \,\dD x\,=\; 0\,.
\end{equation}
Since we focus on the situation where the electron to ion mass ratio
is very small, it allows to describe ions as a steady thermal bath. More precisely, ions are
supposed to be fixed with a prescribed temperature $T_0>0$ and a density
$\rho_i>0$, which is an integrable function over
$\T$. Furthermore, the following quasi-neutrality assumption is satisfied for all time $t\geq0$
$$
\int_{\T} \rho(t,x)\,\dD x \,=\,  \int_{\T} \rho_i(x)\,\dD x 
$$
as soon as this condition is initially verified.
As already mentioned, the other scaling parameter $\tau(\eps)>0$ stands for the ratio between the time which
separates two collisions of an electron with the ionic background and the timescale of observation. In this work, we suppose
$$
\tau(\eps)=\tau_0\,\eps.
$$
Therefore, as $\eps$ goes to zero, we expect that the couple  $(f,\phi)$,
solution to \eqref{vpfp:0}-\eqref{vpfp:1}, converges to $(f_\infty,\phi_\infty)$ given by
\begin{equation*}
	\left\{
	\begin{array}{l}
		\ds v\,\partial_x f_{\infty}
		\,-\,\partial_{x}\phi_{\infty} \,\partial_v f_{\infty} \,=\,\frac{1}{\tau_0}
		\partial_v \left( v \, f_{\infty} \,+\, T_0\,\partial_v f_{\infty} \right)\,,
		\\[1.1em]
		\ds-\partial_{x}^2\phi_{\infty} \,=\, \rho_{\infty}-\rho_i\,,\quad \rho_{\infty}(x)\,=\,\int_{\R}\,f_{\infty}(x,v)\,\dD v\,.
	\end{array}\right.
\end{equation*}
Actually, the equilibrium state $f_{\infty}$ is uniquely determined as follows
\[
f_{\infty}(x,v)\,=\,\rho_{\infty}(x)\,\cM(v)\,,
\]
where $\cM$ denotes the Maxwellian with temperature $T_0$
\begin{equation}
 \label{M:T0}
\cM(v)\,=\, \frac{1}{\sqrt{2\pi \,T_0}} \, \exp\left( -\frac{|v|^2}{2\,T_0}\right)\,,
\end{equation}
and where $\rho_{\infty}$ solves
\be
\label{def:rho_inf}
\left\{
\begin{array}{l}
	\ds\rho_\infty \,=\, \, c_\infty\,\exp\left(-\frac{\phi_{\infty}}{T_0} \right)\,,
	\\[1.1em]
	\ds-\partial_{x}^2\phi_{\infty} \,=\, \rho_{\infty}-\rho_i\,,
\end{array}\right.
\ee
completed with the following condition on the constant $c_\infty>0$
\[
	c_\infty\,\int_{\T} \exp\left(-\frac{\phi_{\infty}}{T_0} \right)\dD x \,=\,\int_{\T} \rho_i \,\dD x \,.
\]
Therefore, the electrons' temperature relaxes to
the background temperature whereas their spatial distribution converges to a Maxwell-Boltzmann equilibrium.\\
Let us point out that there is an attractive version of the
Vlasov-Poisson-Fokker-Planck system \eqref{vpfp:0}-\eqref{vpfp:1}
which is also widely used in stellar physics. For that case the
repulsive electrostatic force is replaced by the attractive
gravitational force, responsible for a change of sign in the Poisson
equation. In this paper, we only consider the repulsive case.
%%%%%%%%%%%%%%%%%%%%%%%%%%%%%%%%%%%%%%%%%%%%%%%%%%%%%%%%%%%%%%%%%
%
%   FF BIBLIO
%
%%%%%%%%%%%%%%%%%%%%%%%%%%%%%%%%%%%%%%%%%%%%%%%%%%%%%%%%%%%%%%%%%%
The numerical resolution of the system
\eqref{vpfp:0}-\eqref{vpfp:1} shares the same difficulties as most
of  kinetic equations: high dimensionality,
presence of various scales, etc... Several numerical methods have been
developed for \eqref{vpfp:0}-\eqref{vpfp:1}, we mention for
instance \cite{VH:98,Ozi:05, cruz, DD, car:21, Tao2015}. These numerical schemes are
either deterministic or stochastic, with an effort to capture some physical
phenomena associated to weakly collisional plasmas such as Landau
damping or two-stream instability, occurring for short time range, before being canceled by
collisions.  More recently, dynamical low-rank algorithms have been
proposed \cite{Lukas2021, Hu2022}, they  decouple the dimensions of the phase space allowing to reduce the computational cost.

Here, we want to design a numerical scheme  preserving the large-time
behavior of solutions to the Vlasov-Poisson-Fokker-Planck equation or related
nonlinear kinetic models. More precisely, we want to describe
accurately the transient regime in which transport effects dominate and where the distribution function exhibits
filamentation, but also large time scales
in which collisions take over, forcing the relaxation of the distribution to thermal
equilibrium, given here by a Maxwell-Boltzmann distribution. Let us remind that
when we neglect both the coupling with the
Poisson equation and the free transport term,  a fully discrete finite difference
scheme for the homogeneous Fokker-Planck equation has been proposed in
the pioneering work of Chang and Cooper \cite{ChangCooper:1970}. This
scheme has nice properties since it preserves the stationary solution and the entropy decay of the
numerical solution. Later, finite volume schemes preserving the
exponential trend to equilibrium have been studied for nonlinear
convection-diffusion equations (see for example
\cite{Bessemoulin-Chatard2012,Burger2010,Chainais-Hillairet2007}
and more recently \cite{Georgoulis,
  Dujardin,MMT} in the frame of hypocoercive methods). Let
us also mention that in \cite{Pareschi2017}, authors investigate the
question of describing correctly the equilibrium state of nonlinear
diffusion and kinetic models for high order schemes.
\\

More recently, in our previous
work \cite{BF_09_22}, we have proposed and carefully studied a numerical scheme based
on Hermite polynomials in the velocity space for the
linear Vlasov-Fokker-Planck equation when the electric
field $E_\infty=-\partial_x\phi_\infty$ is prescribed. This approach preserves the
equilibrium and thanks to a discrete hypocoercive method, it is shown
that numeric solutions relax exponentially fast to thermodynamic equilibrium. When collisions are neglected, Hermite decomposition has also been successfully applied  to approximate the
Vlasov-Poisson system on a finite time interval \cite{Filbet2020, ref:5, bessemoulin2022cv}. Therefore, our goal is to provide here an
efficient approximation of the nonlinear Vlasov-Poisson-Fokker-Planck
system \eqref{vpfp:0}-\eqref{vpfp:1} able to describe
accurately  both weakly and strongly collisional regimes
and to preserve the long time behavior of the solution. To illustrate
the feature of our numerical scheme, we will analyze the asymptotic behavior of the
solution given by the discrete linearized system and then propose
various numerical experiments in various regimes.

 In order to design a well-balanced approximation for
 \eqref{vpfp:0}-\eqref{vpfp:1}, we consider an equivalent
 reformulation where we  define a potential $\psi
 = \phi-\phi_\infty$ and  replace the  electrical potential $\phi$ in
 the equation, it yields that  $\rho_i$ is now replaced by the quantities at
 equilibrium as follows, 
\begin{equation}
	\label{vpfp:new0}
	\left\{
	\begin{array}{l}
		\ds\eps\,\partial_t f \,+\, v\,\partial_x f
		\,-\,\partial_{x}\phi_{\infty} \,\partial_v f 
		\,-\,\partial_{x}\psi \,\partial_v f 
		\,=\,\frac{1}{\tau_0}
		\partial_v \left( v \, f \,+\, T_0\,\partial_v f \right)\,,
		\\[1.1em]
		\ds-\partial_{x}^2\psi \,=\, \rho-\rho_{\infty}\,,\quad \rho(t,x)\,=\,\int_{\R}\,f(t,x,v)\,\dD v\,,
	\end{array}\right.
  \end{equation}
coupled with the condition on $\psi$,
 \begin{equation}
	\label{vpfp:new1}
\int_\T \psi(t,x) \,\dD x\,=\, 0\,.
\end{equation}
The equilibrium to \eqref{vpfp:new0}-\eqref{vpfp:new1} is now characterized by
$(f_\infty,\psi_\infty)$ where $\psi_\infty\equiv 0$.

%%%%%%%%%%%%%%%%%%%%%%%%%%%%
% FF : Bibliographie trend to equilibrium
%%%%%%%%%%%%%%%%%%%%%%%%%%%%

The key-estimate to prove the trend to equilibrium of solutions to \eqref{vpfp:new0}-\eqref{vpfp:new1} is given by
\begin{equation*}
  \frac{\dD }{\dD t} \cH(f,f_\infty)  \,\,=\,\, - \,\frac{1}{\eps\,\tau_0}
  \,\cI(f, f_\infty), 
\end{equation*}
where $ \cH(f,f_\infty)$ denotes the free energy
\begin{equation*}
  %	\label{estim:ener}
\cH(f,f_\infty) \,:=\, \int_{\T\times\R} \ln{\left(\frac{f}{f_{\infty}}\right)}\,f\,\dD x\,\dD v\,+\,\frac{1}{2\,T_0}
\left\|\,\partial_x\psi\,\right\|_{L^2\left(\T\right)}^2\,,
\end{equation*}
and $\cI(f,f_\infty)$ is the entropy dissipation 
\begin{equation*}
\cI(f,f_\infty)\,:=\, 4\,T_0\,
\int_{\T\times\R} \left|
\partial_v
\sqrt{\frac{f}{f_{\infty}}}\,
\right|^2
f_{\infty}\, \dD x\,\dD v\,.
\end{equation*}
The free energy estimate is said to be "hypocoercive" since we have
\[
\cI(f(t),f_\infty)\,=\,0\quad\iff \quad f(t,x,v)\,=\,\rho(t,x)\,\cM(v)\,,\quad \forall(x,v)\in\T\times\R\,,
\]
meaning that the entropy dissipation $\cI$ controls at most the distance between $f$ and its associated \textbf{local} equilibrium $\rho\,\cM$, giving no straightforward information on the long time behavior of $\rho$. \\
In the linear case, corresponding to $\psi\equiv 0$ in \eqref{vpfp:new0}, L. Desvillettes and
C. Villani have proposed a general method, based on the latter
relative entropy estimate and logarithmic Sobolev
inequalities, to overcome this degeneracy in the
position variable \cite{DV:2001}. This
approach has been widely explored in the last decade \cite{Villani:AMS, Dolbeault2015}.
However, when the Vlasov-Fokker-Planck equation is coupled with the
Poisson equation for the electrical potential, a different
functional framework, based on weighted $L^2$ spaces, is applied motivated by the following estimate when $f$ is near
equilibrium \cite{Herau_Thomann16, Herda_Rodrigues}. Indeed, plugging the
following formal expansion into the free energy
\[
f\,\ln{\left(\frac{f}{f_{\infty}}\right)}
\,\underset{f \rightarrow f_{\infty}}{\sim}\,
f-f_{\infty}
\,+\,
\frac{\left|f-f_{\infty}\right|^2}{2\,f_{\infty}}
\]
and using that mass is conserved for solutions to
\eqref{vpfp:new0}-\eqref{vpfp:new1}, we define a new functional, named
the linearized free energy, as
\begin{equation}
  \label{continuous:energy}
\cE(t)\,=\,
\left\|\,f(t)-f_{\infty}\,\right\|^2_{L^2\left(f_{\infty}^{-1}\right)}\,+\,
\frac{1}{T_0}
\left\|\,\partial_x\psi(t)\,\right\|^2_{L^2\left(\T\right)}\,.
\end{equation}
Unfortunately, this functional is not dissipated for the solution to
the nonlinear system \eqref{vpfp:new0}-\eqref{vpfp:new1}, but only
for its linearized version given by 
\begin{equation}
\label{vpfp:lin0}
\left\{
\begin{array}{l}
	\ds\eps\,\partial_t f \,+\, v\,\partial_x f
	\,-\,\partial_{x}\phi_{\infty} \,\partial_v f 
	\,-\,\partial_{x}\psi \,\partial_v f_\infty 
	\,=\,\frac{1}{\tau_0}
	\partial_v \left( v \, f \,+\, T_0\,\partial_v f \right)\,,
	\\[1.1em]
	\ds-\partial_{x}^2\psi \,=\, \rho-\rho_{\infty}\,,\quad \rho(t,x)\,=\,\int_{\R}f(t,x,v)\,\dD v\,,
\end{array}\right.
\end{equation}
coupled with the condition on $\psi$,
 \begin{equation}
	\label{vpfp:lin1}
\int_\T \psi(t,x) \,\dD x\,=\, 0\,.
\end{equation}
This yields for the solution $(f,\psi)$ to \eqref{vpfp:lin0}-\eqref{vpfp:lin1} (see Proposition \ref{prop:2.1} for a complete proof)
\begin{equation}\label{key:estimate}
\frac{1}{2}\,\frac{\dD}{\dD t}\,
\cE(t)
\,=\,
-\,\frac{T_0}{\eps\,\tau_0}
\int_{\T\times\R} \left|
\partial_v
\left(\frac{f(t)}{f_{\infty}}\right)\,
\right|^2
f_{\infty}\, \dD x\,\dD v\,.
\end{equation}

The purpose of this paper is to design a numerical scheme for the
nonlinear Vlasov-Poisson-Fokker-Planck system
\eqref{vpfp:new0}-\eqref{vpfp:new1} for which such estimate occurs on its linearized version \eqref{vpfp:lin0}-\eqref{vpfp:lin1}. To this aim, we propose a simple time
splitting scheme, where the first stage consists in solving the linearized
system \eqref{vpfp:lin0}-\eqref{vpfp:lin1}, whereas the second
stage solves the remaining quadratic part of \eqref{vpfp:new0}-\eqref{vpfp:new1}, that is
\begin{equation*}
			\eps\,\partial_t f  \,-\,\partial_{x}\psi \,\partial_v (f - f_\infty) \,=\,0\,,
\end{equation*}
for which $\psi$ is unchanged.

This approach has several advantages from the computational and
stability point of view. Indeed, both steps will be fully implicit in time allowing to use a large time step, uniformly with respect to the parameter $\eps$. Moreover, the linearized equation is autonomous, hence it requires to solve the \textbf{same} linear system at each time step, which can be done using a direct solver. Furthermore, solving the time dependent implicit second step is in fact negligible in terms of computational costs since the associated system is trivially invertible due to the Hermite discretization.
Finally, the numerical approximation of the linearized system allows to capture a consistent asymptotic profile when $\eps\rightarrow 0$ and it also preserves the free energy estimate as in \cite{BF_09_22}, which treats the case of a Vlasov-Fokker-Planck equation without a coupling with Poisson.

In Section \ref{sec:2} we propose a numerical discretization of the
full model \eqref{vpfp:new0}-\eqref{vpfp:new1} based on Hermite's
decomposition in the velocity space and finite volume scheme for the
space discretization. Then, in Section \ref{sec:3}, we prove
quantitative properties on its linearized version
\eqref{vpfp:lin0}-\eqref{vpfp:lin1}. More precisely, we first prove
a discrete version of the free energy estimate \eqref{key:estimate}
and then, using discrete hypocoercive methods, we prove the exponential
trend to equilibrium with rate $1/\eps$, this uniformly with respect to discretization parameters. This result constitutes a theoretical proof of the asymptotic-preserving properties of the method at the linearized level. Finally, in Section
\ref{sec:4}, we carry out numerical experiments which illustrate the
robustness of our scheme in a wide variety of situations ranging from
near-to-collisionless regime $1\ll \tau_0$ to the stiff limit
$\eps\sim 0$ and including inhomogeneous ionic background.  In particular, we highlight the asymptotic preserving properties of the method. Furthermore, we observe formation of nonlinear echoes and study their suppression in weakly collisional settings as well as simultaneous vortex/filamentation formation for inhomogeneous ionic background. These phenomena have drawn intense mathematical interest in the kinetic community over the past decade \cite{Bedrossian17,Grenier_Toan_Rodnianski22,Chaturvedi_Luk_Nguyen23}. With this work, we aim at taking part in these efforts by proposing numerical methods capable to capture these phenomena efficiently.

\section{Numerical scheme}
\label{sec:2}
\setcounter{equation}{0}
\setcounter{figure}{0}
\setcounter{table}{0} 
In this section, we will introduce our numerical method. We follow the lines of our previous work for the linear
Vlasov-Fokker-Planck equation \cite{BF_09_22} and then propose a
discretization of the Poisson equation allowing to preserve the
energy estimate for the linearized problem
\eqref{vpfp:lin0}-\eqref{vpfp:lin1}. Finally, the nonlinear part is
discretized using an implicit scheme.

\subsection{Hermite's decomposition for the velocity variable}
Let us first focus on the discretization of the velocity variable. It consists in performing a spectral decomposition of the distribution $f$ into its Hermite modes $\left(\Psi_k\right)_{k\in\N}$ defined as
\[
\Psi_{k}(v)\,=\, H_{k}\left(\frac{v}{\sqrt{T_0}}\right)\,\cM(v)\,,
\]
and which constitute an orthonormal system for the inverse Gaussian weight since it holds
\[
\int_{\R}\,
\Psi_{k}(v)\,\Psi_{l}(v)\,\cM^{-1}(v)\dD v
\,=\,
\delta_{k,l}\,,
\]
where $\cM$ is the Maxwellian corresponding to the stationary state of
the Fokker-Planck operator \eqref{M:T0} and $\delta_{k,l}$ the Kronecker symbol ($\delta_{k,l}=1$ when $k=l$ and
$\delta_{k,l}=0$ otherwise). In the latter definition, $
\ds
\left(
H_{k}
\right)_{k\in\N} 
$ stands for the family of Hermite polynomials defined recursively as follows
$H_{-1}=0$, $H_{0}=1$ and
\[
\xi\,H_{k}(\xi)\,=\,
\sqrt{k}\,H_{k-1}(\xi)
        \,+\,
        \sqrt{k+1}\,H_{k+1}(\xi)
\,,\quad\forall\, k\,\geq\,0\,.
\]
Let us also point out that Hermite's polynomials verify the following relation
\[
H_k'(\xi)\,=\,\sqrt{k}\,H_{k-1}(\xi)\,,\quad\forall\, k\,\geq\,0\,.
\]
The Hermite system arises naturally in our context since it offers a simple discrete reformulation of the $L^2\left(f_{\infty}^{-1}\right)$-norm which appears in the key estimate \eqref{key:estimate}, indeed it holds
\[
\left\|f(t)\right\|^2_{L^2\left(f_{\infty}^{-1}\right)}
\,=\,
\sum_{k\in\N}
\left\|C_k(t)\right\|^2_{L^2\left(\rho_{\infty}^{-1}\right)}\,,
\]
where $
C\,=\,
\left(
C_{k}
\right)_{k\in\N}
$ stand for the Hermite components of $f$
\begin{equation*}
	f\left(t,x,v\right)
	\,=\,
	\sum_{k\in\N}\,
	C_{k}
	\left(t,x
	\right)\,\Psi_{k}(v)\,.
\end{equation*}
As one can see in the latter relation, each term of the sequence $
C\,=\,
\left(
C_{k}
\right)_{k\in\N}
$ naturally belongs to the weighted space $L^2\left(\rho_{\infty}^{-1}\right)$. From the numerical point of view, working in weighted spaces induces difficulties when it comes to integro/differential manipulation such as integration by part. This is the reason why rather than discretizing coefficients $
C\,=\,
\left(
C_{k}
\right)_{k\in\N}
$, we consider their re-normalized versions $
D\,=\,
\left(
D_{k}
\right)_{k\in\N}
$ defined as 
\begin{equation}
  \label{f:decomp}
	f\left(t,x,v\right)
	\,=\,\sqrt\rho_\infty(x)\,
	\sum_{k\in\N}\,
	D_{k}
	\left(t,x
	\right)\,\Psi_{k}(v)\,.
\end{equation}
According to latter considerations, renormalized Hermite coefficients $
D\,=\,
\left(
D_{k}
\right)_{k\in\N}
$ verify
\[
\left\|f(t)\right\|^2_{L^2\left(f_{\infty}^{-1}\right)}
\,=\,
\sum_{k\in\N}
\left\|D_k(t)\right\|^2_{L^2\left(\T\right)}\,.
\]
To sum up, renormalized Hermite coefficients play a fundamental role in our analysis for two reasons: they offer a discrete reformulation of the key quantity $\cE(t)$ given by \eqref{continuous:energy} and they belong to the \textbf{unweighted} $L^2$-Lebesgue space over $\T$. Moreover, there is another benefit coming out of this choice: thanks to the properties of Hermite polynomials, one can see that Hermite functions diagonalize the Fokker-Planck operator since it holds
\[
\partial_{v}\left[\,
v\,\Psi_{k}
\,+\,
T_0\,\partial_{v}\,\Psi_{k}\,
\right]
\,=\,
\,-\,k
\,\Psi_{k}\,.
\]
Therefore, following \cite{BF_09_22}, we substitute the decomposition
\eqref{f:decomp} in the first line of \eqref{vpfp:new0} : using the identities $E_\infty =
-\partial_x\phi_\infty$ and $\rho_\infty \, E_{\infty} \,=\, T_0\, \partial_x\rho_\infty $, we get
that $D\,=\,(D_{k})_{k \in \N}$ satisfies the following system
  \begin{equation}
  \label{Hermite:D}
  \left\{
    \begin{array}{l}
  \ds\eps\,\partial_t D_{k} \,+\,
       \sqrt{k}\,
      \cA\,D_{k-1}\,- \, \sqrt{k +1}\,
        \cA^\star D_{k+1}\,+\,\sqrt{\frac{k}{T_0}}\, \partial_x\psi\,D_{k-1}
          \,  =\,  - \,  \frac{k}{\tau_0}\,  D_{k}\,, \qquad \forall\,k\in\N\,,
        \\[1.2em]
         \ds D_{k}(t=0) =  D^{0,\eps}_{k}\,, \qquad \forall\,k\in\N\,,
\end{array}\right.
\end{equation}
{where we set $D_{-1}=0$}. Due to our unweighted $L^2$-framework, the latter formulation enjoys a nice duality structure since $\cA$ and $\cA^{\star}$ are adjoint operators in $L^2\left(\T\right)$. More precisely, $\cA$ and $\cA^{\star}$ are given by
\begin{equation*}
%\label{AA}
\left\{
\begin{array}{l}
\ds  \cA \,u  \,=\, +\sqrt{T_0}\,\partial_{x}  u \,-\,
\frac{E_{\infty}}{2\sqrt{T_0}}\, u\,, \\[1.1em]
\ds\cA^\star \,u  \,=\, -\sqrt{T_0}\,\partial_{x}  u \,-\,
\frac{E_{\infty}}{2\sqrt{T_0}}\, u\,.
\end{array}\right.
\end{equation*}
Notice that both operators $\cA$ and $\cA^\star$ may also be rewritten as follows
\begin{equation*}
	%\label{AA}
	\left\{
	\begin{array}{l}
		\ds  \cA \,u  \,=\, +\sqrt{T_0\,\rho_{\infty}}\,\partial_x
		\left(
		\frac{u}{\sqrt{\rho}_{\infty}}\right)
		\,, \\[1.1em]
		\ds  \cA^{\star} \,u  \,=\, -\sqrt{\frac{T_0}{\rho_{\infty}}}\,\partial_x
		\left(
		\sqrt{\rho}_{\infty}\,u\right)
		\,.
	\end{array}\right.
\end{equation*}
To conclude, we denote by $D_\infty$ the Hermite decomposition of the
equilibrium $f_{\infty}$. It is determined for all {$k\in\N$} by
\begin{equation}
  \label{Dinf}
D_{\infty,k} \,=\,
\left\{
  \begin{array}{l}
    \sqrt\rho_\infty, \,\, {\rm if } \,\, k=0\,,
    \\[0.9em]
    0, \, \, {\rm else\,.}
    \end{array}
  \right.
\end{equation}
\subsection{Poisson equation formulated in the Hermite framework}\label{sec:form:poisson:hermite}
To compute the electrical potential $\psi$, we will reformulate the Poisson equation in such a way
that the free energy estimate \eqref{key:estimate} for the linearized system
\eqref{vpfp:lin0}-\eqref{vpfp:lin1} is satisfied. The main idea to
preserve such estimate is to construct a scheme for which it is possible to perform at the discrete level analogous calculations as the ones needed at the continuous level to derive the free
energy estimate \eqref{key:estimate}.  To this aim, we introduce a
modified potential $\omega$ given by
$$
\omega\,=\,\frac{\sqrt{\rho}_{\infty}}{T_0}\,\psi\,.
$$
Using the definition of the operator $\cA$, the electric field
$E=-\partial_x \psi $ is now given by
$$
\sqrt\rho_\infty  \,E \,\,=\,\, -\sqrt\rho_\infty \,\partial_x \psi  \,=\, -\sqrt{T_0} \,\cA\, \omega
$$
and the modified potential $\omega$ solves the modified Poisson equation
\begin{equation}
  \label{Hermite:v}
\left(\cA^\star\,\rho_{\infty}^{-1}\,\cA\right) \omega\, =\, D_0-\sqrt{\rho}_{\infty}\,.
\end{equation}
This new formulation will allow us to easily construct a numerical scheme for
the Poisson equation preserving the key energy estimate \eqref{key:estimate}, where in this framework, the linearized system \eqref{vpfp:lin0}-\eqref{vpfp:lin1} rewrites 
\begin{equation}
	\label{Hermite:lin}
	\left\{
	\begin{array}{l}
		\ds\partial_t D_{k} \,+\,
		\frac{1}{\eps}\,\left( \sqrt{k}\,
		\cA\,D_{k-1}\,- \, \sqrt{k +1}\,
		\cA^\star D_{k+1}\,+\, \cA\,  \omega \,\delta_{k,1} 
		\right)  \,  =\,  - \,  \frac{k}{\eps\,\tau_0}\,  D_{k}\,, \qquad{\forall\,k\in\N}\,,
		\\[1.2em]
		\ds \left(\cA^\star\,\rho_{\infty}^{-1}\,\cA\right) \,\omega\, =\, D_0-\sqrt{\rho}_{\infty}\,,
		\\[1.2em]
		\ds D_{k}(t=0) =  D^{0,\eps}_{k}\,, \qquad{\forall\,k\in\N}\,,
	\end{array}\right.
\end{equation}
completed with the condition 
\begin{equation}
\int_{\T} \frac{\omega}{\sqrt\rho_{\infty}} \,\,\dD x \,=\,0\,,
\end{equation}
and where the linearized free energy $\cE$ reads
      \begin{equation}
          \label{E:D}
	\cE(t)
	\,=\,
	\frac{1}{2}
	\left(
	\left\|
	D(t)-D_{\infty}
	\right\|^2_{L^2}
	\,+\,
	\left\|
	\frac{\cA\,\omega}{\sqrt{\rho}_{\infty}}
	\right\|^2_{L^2\left(\T\right)}
	\right)\,,
      \end{equation}
      where $\|\cdot\|_{L^2}$ stands for the overall $L^2$-norm \textbf{with no weight}
\[
\|D\|^2_{L^2}
\,=\,
\sum_{
	k\in\N
}
\| D_{k}\|_{L^2\left(\T\right)}^2\,.
\]
From this reformulated equation,  we first prove the free energy
estimate on the linearized system  \eqref{vpfp:lin0}-\eqref{vpfp:lin1}.
\begin{proposition}
\label{prop:2.1}
Consider a {formal solution} $(D_k)_{k\in\N}$ to 
\eqref{Hermite:lin} such that for all $t>0$,
{$$
\sum_{k\geq 0} k\, \| D_k(t)\|_{L^2}^2 < \infty.
$$}
The following free energy estimate holds for all $t\geq0$
\be
\label{estim:L2}
\frac{\dD }{\dD t}\cE(t) \,+\,
\frac{1}{\eps\, \tau_0}\,\sum_{k\in\N^\star} k\,\left\|D_k(t)\right\|^2_{L^2\left(\T\right)}
\,=\, 0\,.
\ee
\end{proposition}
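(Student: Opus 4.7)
The plan is to take the time derivative of the two contributions to $\cE$ in \eqref{E:D} separately, and show that the coupling terms cancel exactly, leaving only the Fokker-Planck dissipation. The two ingredients that make everything fit together are the adjointness of $\cA$ and $\cA^{\star}$ in $L^2(\T)$ and the self-adjointness of the elliptic operator $\cA^\star \rho_\infty^{-1} \cA$ appearing in \eqref{Hermite:v}.

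I would first handle $\tfrac{1}{2}\|D-D_\infty\|^2_{L^2}$. Taking the $L^2(\T)$ inner product of the $k$-th line of \eqref{Hermite:lin} with $D_k - D_{\infty,k}$ and summing over $k\in\N$, the right-hand side produces the dissipation $-\tfrac{1}{\eps\tau_0}\sum_{k\ge 1} k\|D_k\|^2$ (recalling $D_{\infty,k}=0$ for $k\ge 1$). The key point is to treat the tridiagonal transport block. Using $\langle \cA u,w\rangle_{L^2(\T)}=\langle u,\cA^\star w\rangle_{L^2(\T)}$, one shows that the two sums
\[
-\frac{1}{\eps}\sum_{k\ge 1}\sqrt{k}\,\langle D_k,\cA D_{k-1}\rangle_{L^2(\T)},\qquad \frac{1}{\eps}\sum_{k\ge 1}\sqrt{k+1}\,\langle D_k,\cA^\star D_{k+1}\rangle_{L^2(\T)}
\]
telescope for all indices $k\ge 2$, and the only remaining contribution is $-\tfrac{1}{\eps}\langle D_1,\cA D_0\rangle_{L^2(\T)}$ plus the $k=0$ contribution $\tfrac{1}{\eps}\langle D_0-\sqrt{\rho_\infty},\cA^\star D_1\rangle_{L^2(\T)}=\tfrac{1}{\eps}\langle \cA(D_0-\sqrt{\rho_\infty}),D_1\rangle_{L^2(\T)}$. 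Here I would use the identity $\cA\sqrt{\rho_\infty}=0$, which follows from the explicit formula $\cA u=\sqrt{T_0\rho_\infty}\,\partial_x(u/\sqrt{\rho_\infty})$, so the two contributions cancel. What is left from this step is
\[
\frac{\dD }{\dD t}\,\frac{1}{2}\|D-D_\infty\|^2_{L^2}\,=\,-\frac{1}{\eps}\langle D_1,\cA\omega\rangle_{L^2(\T)}\,-\,\frac{1}{\eps\tau_0}\sum_{k\ge 1}k\,\|D_k\|_{L^2(\T)}^2.
\]

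For the second piece, I would rewrite the electric contribution using \eqref{Hermite:v}: since $\cA^\star\rho_\infty^{-1}\cA$ is self-adjoint,
\[
\frac{1}{2}\,\Big\|\frac{\cA\omega}{\sqrt{\rho_\infty}}\Big\|^2_{L^2(\T)}\,=\,\frac{1}{2}\,\langle \omega,\cA^\star\rho_\infty^{-1}\cA\,\omega\rangle_{L^2(\T)}\,=\,\frac{1}{2}\,\langle \omega,D_0-\sqrt{\rho_\infty}\rangle_{L^2(\T)}.
\]
Differentiating and exploiting self-adjointness of $\cA^\star\rho_\infty^{-1}\cA$ together with the relation $\cA^\star\rho_\infty^{-1}\cA\,\partial_t\omega=\partial_t D_0$ obtained by differentiating the Poisson equation, this derivative equals $\langle \omega,\partial_t D_0\rangle_{L^2(\T)}$. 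The $k=0$ equation in \eqref{Hermite:lin} gives $\partial_t D_0=\tfrac{1}{\eps}\cA^\star D_1$, so that
\[
\frac{\dD}{\dD t}\,\frac{1}{2}\,\Big\|\frac{\cA\omega}{\sqrt{\rho_\infty}}\Big\|^2_{L^2(\T)}\,=\,\frac{1}{\eps}\,\langle\omega,\cA^\star D_1\rangle_{L^2(\T)}\,=\,\frac{1}{\eps}\,\langle D_1,\cA\omega\rangle_{L^2(\T)}.
\]

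Adding the two contributions, the coupling terms $\pm\tfrac{1}{\eps}\langle D_1,\cA\omega\rangle_{L^2(\T)}$ cancel, yielding \eqref{estim:L2}. The main obstacle is really bookkeeping: one must carefully re-index the telescoping sum in the transport block and verify that the boundary term at $k=0,1$ matches exactly the one produced by the linearized force $\cA\omega\,\delta_{k,1}$, which is precisely where the identity $\cA\sqrt{\rho_\infty}=0$ and the self-adjointness of $\cA^\star\rho_\infty^{-1}\cA$ are used in tandem. No integrability issue should arise at the continuous level since we are working formally with the solution of the linear autonomous problem \eqref{Hermite:lin}.
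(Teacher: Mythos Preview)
Your proposal is correct and follows essentially the same approach as the paper: test the $k$-th equation against $D_k-D_{\infty,k}$, use the duality $\langle \cA u,w\rangle=\langle u,\cA^\star w\rangle$ together with $\cA\sqrt{\rho_\infty}=0$ to make the transport block telescope, and then identify the leftover coupling term $\tfrac{1}{\eps}\langle D_1,\cA\omega\rangle$ with the time derivative of $\tfrac12\|\cA\omega/\sqrt{\rho_\infty}\|^2$ via the Poisson equation and self-adjointness of $\cA^\star\rho_\infty^{-1}\cA$. The only cosmetic difference is that the paper transforms the coupling term directly into the derivative of the electric energy, whereas you compute both pieces of $\cE$ separately and cancel the coupling terms at the end; the ingredients and logic are identical.
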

\begin{proof}
We multiply equation \eqref{Hermite:lin} by $D_k-D_{\infty,k}$, sum
over all $k \in \N$, integrate in $x\in \T$ and then {rearrange all the terms}, this yields
$$
\frac{1}{2} \frac{\dD }{\dD t} \| D - D_\infty\|_{L^2}^2  \,+\, \frac{1}{\eps} \int_{\T} \cA\,\omega \, D_1  \dD x
\,=\, - \frac{1}{\eps\,\tau_0} \sum_{k\in\N^*} k \|D_k\|_{L^2\left(\T\right)}^2.
$$
We rewrite the integral term in the latter estimate using the duality structure and the equation on $D_0$
$$
 \int_{\T} \cA\,\omega \, D_1 \dD x \,=\, \int_{\T} \omega \, \cA^\star D_1 \dD
 x  \,=\, \eps\,\int_{\T} \omega \, \partial_t (D_0 - \sqrt{\rho}_\infty)\,\dD x. 
 $$
 Using the reformulated Poisson equation in \eqref{Hermite:lin} and the duality structure again, we deduce
\[
\eps
\int_{\T} \omega\,\partial_t (D_0- \sqrt{\rho}_\infty)\,\dD x
\,=\,\eps
\int_{\T} \omega\,\partial_t\left(\cA^\star\,\rho_{\infty}^{-1}\,\cA\right) \omega \,\dD x
\,=\,\frac{\eps}{2}\,
\frac{\dD}{\dD t}
\left\|\frac{\cA\,\omega}{\sqrt{\rho}_{\infty}}\right\|^2_{L^2\left(\T\right)}
\,.
\]
It finally yields the free
 energy estimate
 $$
\frac{\dD }{\dD t}\cE(t)  \,=\,
-\frac{1}{\eps\, \tau_0}\,\sum_{k\in\N^\star}
k\,\left\|D_k(t)\right\|^2_{L^2\left(\T\right)}\,.
 $$
\end{proof}
Let us now draw two conclusions from these considerations. On the one hand, we observe that the estimate given in Proposition \ref{prop:2.1} is not sufficient to prove convergence of
the solution to the linearized system \eqref{Hermite:lin} towards the stationary state
\eqref{Dinf} because of the lack of coercivity. To bypass this difficulty, we will define a modified relative energy $\cH$ as
\begin{equation}
	\cH(t) \,=\,
	\cE(t)\,+\, \beta_0\,\left\langle
	\cA^\star D_1(t),\, u(t)\right\rangle_{L^2\left(\T\right)}\,,
	\label{eq:H0}
\end{equation}
where $\beta_0>0$ is a small free parameter and $u$ is solution to
$$
\left\{
\begin{array}{l}
	\ds \cA^\star\,\cA \,u\ =\ D_0\,-\,\sqrt{\rho}_{\infty}\,,
	\\[1.1em]
	\ds\int_{\T}u \,\sqrt\rho_{\infty}\,\dD x \,=\,0\,.
\end{array}\right.
$$
To get the convergence to the solution to the linearized system
\eqref{Hermite:lin} to the stationary state, the strategy consists in
proving that $\cH$ and $\cE$ are equivalent and that there exists a
constant  $\kappa>0$ such that
$$
\frac{\dD}{\dD t} \cH(t) \leq - \frac{\kappa}{\eps}\,\min\left(\tau_0,\tau_0^{-1}\right)\, \cH(t).
$$
We do not detail these results in the continuous case since they constitute the object of Section \ref{sec:3} in the discrete framework.\\
On the other hand, from the reformulated Poisson equation for the linearized system \eqref{vpfp:new0}-\eqref{vpfp:new1}, we are now able to write the Hermite method for the full Vlasov-Poisson-Fokker-Planck
system, which reads
\begin{equation*}
 % \label{Hermite:nlD}
  \left\{
    \begin{array}{l}
  \ds\eps\,\partial_t D_{k} \,+\,
      \sqrt{k}\,
      \cA\,D_{k-1}\,- \, \sqrt{k +1}\,
        \cA^\star D_{k+1}\,+\,\sqrt{\frac{k}{\rho_\infty}}\, \cA\,
      \omega\, D_{k-1}
      \,  =\,  - \,  \frac{k}{\tau_0}\,  D_{k}\,, \qquad {\forall\,k\in\N}\,,
         \\[1.2em]
      \ds \left(\cA^\star\,\rho_{\infty}^{-1}\,\cA\right) \,\omega\, =\, D_0-\sqrt{\rho}_{\infty}\,,
        \\[1.2em]
         \ds D_{k}(t=0) =  D^{0,\eps}_{k}\,, \qquad{\forall\,k\in\N}\,,
\end{array}\right.
\end{equation*}
completed with the condition 
\begin{equation*}
	\int_{\T} \frac{\omega}{\sqrt\rho_{\infty}} \,\,\dD x \,=\,0\,.
\end{equation*}
This latter formulation  is equivalent to the initial
  system \eqref{Hermite:D}. The Hermite spectral method now consists
  in considering a finite number of modes $N_H$. Therefore we solve for $k=0,\ldots, N_H$,
\begin{equation}
  \label{Hermite:nlD}
  \left\{
    \begin{array}{l}
  \ds\eps\,\partial_t D_{k} \,+\,
      \sqrt{k}\,
      \cA\,D_{k-1}\,- \, \sqrt{k +1}\,
        \cA^\star D_{k+1}\,+\,\sqrt{\frac{k}{\rho_\infty}}\, \cA\,
      \omega\, D_{k-1}
      \,  =\,  - \,  \frac{k}{\tau_0}\,  D_{k}\,,
         \\[1.2em]
      \ds \left(\cA^\star\,\rho_{\infty}^{-1}\,\cA\right) \,\omega\, =\, D_0-\sqrt{\rho}_{\infty}\,,
        \\[1.2em]
         \ds D_{k}(t=0) =  D^{0,\eps}_{k}\,,
\end{array}\right.
\end{equation}
where we set $D_{-1}=D_{N_H+1}=0$. It is worth to mention that Proposition \ref{prop:2.1} also holds true for \eqref{Hermite:nlD}. {Indeed, this
truncation does not modify the above computations on the free energy
estimate.}

In the following subsection, we design a well-balanced finite volume discretization of
the nonlinear system \eqref{Hermite:nlD} such that the associated
approximation of the linearized system \eqref{Hermite:lin} satisfies the estimate given in Proposition \ref{prop:2.1} (Section \ref{sec:3}). 

\subsection{Finite volume discretization for the space variable}
We now turn to the phase space and time discretization of the system
\eqref{Hermite:nlD} and propose a well-balanced time splitting
scheme. \\
To discretize the phase space domain, we fix a number of Hermite modes $N_H\in\N^*$. Then, we consider an interval $(a,b)$ of $
\mathbb{R}$ and for $N_{x}\in\N^\star$ an odd number, introduce the set
$\J=\{1,\ldots, N_x\}$ and a family of control volumes
$\left(K_{j}\right)_{j\in\J}$ such that
$K_{j}=\left]x_{j-{1}/{2}},x_{j+{1}/{2}}\right[$ with $x_{j}$ the
middle of the interval $K_j$ and 
\begin{equation*}
a=x_{{1}/{2}}<x_{1}<x_{{3}/{2}}<...<x_{j-{1}/{2}}<x_{j}<x_{j+{1}/{2}}<...<x_{N_{x}}<x_{N_{x}+{1}/{2}}=b\,.
\end{equation*}
Let us set
$$
\left\{
  \begin{array}{l}
\ds\Delta x_{j}=x_{j+{1}/{2}}-x_{j-{1}/{2}}, \,\text{ for } j \in\J\,,
\\[0.9em]
\ds\Delta x_{j+{1}/{2}} = x_{j+1}-x_{j}, \,\text{ for } 1 \leq j \leq N_{x}-1\,.
  \end{array}\right.
$$
We also introduce the parameter $h$ such that
$$
h \,=\, \max_{j \in\J} \Delta x_j\,.
$$
To discretize the time variable, we fix a time step $ \Delta t$ and we set $t^{n}=n \Delta t$ with
$n\in\N$. Our time discretization of $\R^+$ is then given by the
increasing sequence of $(t^{n})_{n \in\N}$. \\
\be
\label{discrete:step1}
\left\{
\begin{array}{l}
\ds
\eps\,\frac{D^{n+1/2}_{1} -D_{1}^{n} }{\Delta t}
\,+\,\mathcal{A}_{h}\,D^{n+1/2}_{0}
\,-\,\sqrt{2}\,\mathcal{A}_{h}^\star\,D^{n+1/2}_{2} \,+\, \cA_h\,\omega_h^{n+1/2} \,=\,
-\, \frac{1}{\tau_0} D^{n+1/2}_{1}\,,
\\[1.em]
\ds
\eps\,\frac{D^{n+1/2}_{k} -D_{k}^{n} }{\Delta t}  \,+\,
\sqrt{k}\,\mathcal{A}_{h}\,D^{n+1/2}_{k-1}
\,-\,\sqrt{k+1}\,\mathcal{A}_{h}^\star\,D^{n+1/2}_{k+1}
\,=\,-\, \frac{k}{\tau_0}\, D^{n+1/2}_{k}\,, \,{\rm for }\, k\neq 1\,,\\[1.2em]
\ds (\cA_h^\star\,\rho_{\infty}^{-1}\,\cA_h) \,\omega_h^{n+1/2}\, =\, D^{n+1/2}_0-\sqrt{\rho}_{\infty}\,,
\\[1.1em]
\ds\sum_{j\in\J}\Delta x_j \, \omega_j^{n+1/2} \,\sqrt\rho^{\,-1}_{\infty,j}\,=\,0\,,
\end{array}\right.
\ee
for $k\in\{0,\ldots,N_H\}$ and $D_k^{n+1/2}=0$ when $k>N_H$ and $k=-1$. Moreover, operator $\mathcal{A}_{h}$ (resp. $\mathcal{A}_{h}^\star$) is an
approximation of the operator $\cA$ (resp. $\cA^\star$) given by
\be
\label{eq:Ah}
\cA_h = (\cA_j)_{j \in\J} \quad{\rm and}\quad \cA_h^\star = (\cA_j^\star)_{j \in\J} 
\ee
and where for
$D=(\cD_{j})_{j\in\J}$ it holds
\be
\label{def:Ah}
\left\{
\begin{array}{l}
	\ds\cA_{j} D \,=\,  \ds\sqrt{T_0}\,\frac{\cD_{j+1} - \cD_{j-1}}{2\Delta
		x_j}  \,-\, \frac{E_{\infty,j}}{2\,\sqrt{T_0}}\, \cD_{j}\,, \quad j\in\J\,,
	\\[1.1em]
	\ds\cA_{j}^\star D \,=\, -\ds\sqrt{T_0}\,\frac{\cD_{j+1} - \cD_{j-1}}{2\Delta
		x_j}  \,-\, \frac{E_{\infty,j}}{2\,\sqrt{T_0}}\, \cD_{j}\,, \quad j \in\J\,,
\end{array}\right.
\ee
whereas the discrete electric field $E_{\infty,j}$ is given by
\be
\label{def:Ei}
E_{\infty,j} \,=\, -\frac{\phi_{\infty,j+1}-\phi_{\infty,j-1}}{2\Delta x_j} \,=\, \frac{2\,T_0}{\sqrt\rho_{\infty,j}}\,\frac{\sqrt\rho_{\infty,j+1}- \sqrt\rho_{\infty,j-1}}{2\,\Delta x_j}\,,
\ee
where $\rho_{\infty,j}$ is an approximation of the stationary density $\rho_{\infty}$ on the cell
$K_j$. This latter formula is consistent with the definition of
$\sqrt\rho_{\infty}= \exp\left({-\phi_{\infty}/(2T_0)}\right)$ and the fact that
$$
\frac{1}{2\,T_0}\,\partial_x\phi_{\infty} \,=\, -\frac{1}{\sqrt\rho_{\infty}} \, \partial_x\sqrt\rho_{\infty}\,,
$$
furthermore, our discretization of the field $E_{\infty}$ allows to preserve the equilibrium since it ensures
\begin{equation}\label{preserves:equi:h}
\cA_j \sqrt{\rho}_{\infty}\,=\,
\sqrt{T_0}\,\frac{\sqrt{\rho}_{\infty,j+1} - \sqrt{\rho}_{\infty,j-1}}{2\Delta
	x_j}  \,-\, \frac{E_{\infty,j}}{2\,\sqrt{T_0}}\, \sqrt{\rho}_{\infty,j}
	\,=\,
0\,,\quad\forall j\in\cJ\,.
\end{equation}
This first step requires the numerical resolution of a linear system
which does not depend on the time index $n$. Hence a direct solver based on
$LU$ factorization is applied to get the solution
$\left(D^{n+1/2},\omega_h^{n+1/2}\right)$.\\

On the other hand, we solve the nonlinear part using
again a fully implicit Euler scheme
\be
	\label{discrete:step2}
	\left\{
	\begin{array}{lll}
		\ds
		D^{n+1}_{0}\,=\,D^{n+1/2}_{0}\,,
		\\[1.em]
		\ds
		\omega^{n+1}_{h}\,=\,\omega^{n+1/2}_{h}\,,
		\\[1.em]
		\ds
		\eps\,\frac{D^{n+1}_{k} -D_{k}^{n+1/2} }{\Delta t}  \,+\,
		\sqrt{\frac{k}{\rho_{\infty}}}\,\mathcal{A}_{h}\,\omega^{n+1}_h
		\left(D^{n+1}_{k-1}\,-\,D_{\infty,k-1}\right)  \,&\ds=\,
	0\,,\quad{\rm if}\,\,k\geq1\,,
	\end{array}\right.
	\ee
for $k\in\{0,\ldots,N_H\}$ and $D_k^{n+1}=0$ when $k>N_H$.
Observe that since $\omega_h^{n+1/2}$ and $D_0^{n+1/2}$ do not change
during this second step, the latter system is trivially invertible and hence does not require
any linear solver. \\
In the next section we analyze the linearized step \eqref{discrete:step1} for which we prove exponential relaxation towards equilibrium in the long time regime. Then, in Section \ref{sec:4}, we will perform numerical simulations on the full nonlinear scheme \eqref{discrete:step1}-\eqref{discrete:step2}.

\section{Trend to equilibrium of the discrete linearized system}
\label{sec:3}
\setcounter{equation}{0}
\setcounter{figure}{0}
\setcounter{table}{0}
In this section, we only consider the numerical scheme applied to the linearized
system \eqref{Hermite:lin} corresponding to the first step in the splitting method \eqref{discrete:step1}-\eqref{discrete:step2}, that is,
\be
	\label{discrete:lin}
	\left\{
	\begin{array}{l}
		\ds
		\eps\,\frac{D^{n+1}_{1} -D_{1}^{n} }{\Delta t}  \,+\,\mathcal{A}_{h}\,D^{n+1}_{0} \,-\,\sqrt{2}\,\mathcal{A}_{h}^\star\,D^{n+1}_{2} \,+\,\cA_h\,\omega_h^{n+1} \,=\,
		-\,  \frac{1}{\tau_0}D^{n+1}_{1}\,,
		\\[1.em]
		\ds
		\eps\,\frac{D^{n+1}_{k} -D_{k}^{n} }{\Delta t}  \,+\,
		\sqrt{k}\,\mathcal{A}_{h}\,D^{n+1}_{k-1}
          \,-\,\sqrt{k+1}\,\mathcal{A}_{h}^\star\,D^{n+1}_{k+1}
          \,=\,-\, \frac{k}{\tau_0}\, D^{n+1}_{k}\,, \,{\rm for }\, k\neq 1\,,\\[1.2em]
		\ds (\cA_h^\star\,\rho_{\infty}^{-1}\,\cA_h) \,\omega_h^{n+1}\, =\, D^{n+1}_0-\sqrt{\rho}_{\infty}\,,
		\\[1.1em]
		\ds\sum_{j\in\J}\Delta x_j \, \omega_j^{n+1} \,\sqrt\rho^{\,-1}_{\infty,j}\,=\,0\,,
	\end{array}\right.
      \ee
for $k\in\{0,\ldots,N_H\}$ and $D_k^{n+1}=0$ when $k>N_H$ and $k=-1$.
Moreover, we define the discrete free energy of the solution $D^n=(D_{k}^n)_{k\in\N}$ to
\eqref{discrete:lin} as follows
\begin{equation}\label{Energy}
	\cE^{n}
	\,=\,
	\frac{1}{2}
	\left(
	\left\|
	D^{n}-D_{\infty}
	\right\|^2_{l^2}
	\,+\,
	\left\|
	\frac{\cA_h\,\omega^n_h}{\sqrt{\rho}_{\infty}}
	\right\|^2_{l^2\left(\T\right)}
	\right)\,,
\end{equation}
where 
\begin{equation*}
	\left\|
	D
	\right\|^2_{l^2}
	\,=\,
	\sum_{k=0}^{N_H} 
	\left\|
	D_k
	\right\|^2_{l^2\left(\T\right)}
	\,,\quad\text{and}\quad
	\left\|
	D_k
	\right\|^2_{l^2\left(\T\right)}
	\,=\,
	\sum_{j\in\cJ} 
	|\cD_{k,j}|^2 \Delta x_j\,,
\end{equation*}
and where $D_{\infty}$ is defined as 
\begin{equation*}
	D_{\infty,k} \,=\,
	\left\{
	\begin{array}{l}
		\sqrt\rho_\infty, \,\, {\rm if } \,\, k=0\,,
		\\[0.9em]
		0, \, \, {\rm else\,,}
	\end{array}
	\right.
\end{equation*}
and recall \cite[Lemma $3.3$]{BF_09_22} that $\cA_h$ and $\cA^\star_h$ are adjoint operators in $l^2(\T)$.\\

Then, we prove that the solution to the fully discrete system
\eqref{discrete:lin} converges exponentially fast to its discrete equilibrium, which is consistent with the continuous system.
\begin{theorem}
        \label{th:3.1}
      Consider the solution $(D^n)_{n\in\N}$ to \eqref{discrete:lin}
      with $N_H\in\N^*$ and $N_x$ an odd number. Then the following discrete energy estimate holds for all $n\geq0$
$$
  \cE^{n} \leq 3\,
  \left(1+\frac{\kappa}{\eps}\,\min\left(\tau_0,\tau_0^{-1}\right)\,\Delta
    t\right)^{-n} \,
    \cE^{0}\,,
$$
where $\kappa>0$ depends only on $\rho_i$, $T_0$ and $|b-a|$.
\end{theorem}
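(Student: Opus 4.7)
The plan is to implement at the discrete level the two-stage hypocoercive strategy sketched immediately after Proposition~\ref{prop:2.1}. The natural discrete energy $\cE^n$ is only dissipated on the modes $D_k^{n+1}$ with $k\geq 1$, and controls neither $D_0^{n+1}-\sqrt{\rho}_\infty$ nor $\cA_h\omega_h^{n+1}/\sqrt{\rho}_\infty$; so I would introduce an auxiliary sequence $u_h^n=(u_j^n)_{j\in\cJ}$ solving the discrete elliptic problem
\begin{equation*}
\cA_h^\star\,\cA_h\,u_h^n \,=\, D_0^n - \sqrt{\rho}_\infty, \qquad \sum_{j\in\cJ}\Delta x_j\,u_j^n\,\sqrt{\rho}_{\infty,j} \,=\, 0,
\end{equation*}
and the modified discrete free energy
\begin{equation*}
\cH^n \,:=\, \cE^n \,+\, \beta_0\,\langle \cA_h^\star D_1^n,\,u_h^n\rangle_{l^2(\T)},
\end{equation*}
where $\beta_0>0$ is a small parameter to be tuned. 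The well-balanced identity~\eqref{preserves:equi:h}, namely $\cA_h\sqrt{\rho}_\infty=0$, ensures that $u_h^n$ is well-defined under the zero-weighted-mean constraint.

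First I would derive the discrete analog of Proposition~\ref{prop:2.1}: testing~\eqref{discrete:lin} by $D_k^{n+1}-D_{\infty,k}$, summing in $k$, using the adjointness of $\cA_h,\cA_h^\star$ recalled in~\cite{BF_09_22}, the Poisson relation for $\omega_h^{n+1}$, and the convexity inequality $2a(a-b)\geq |a|^2-|b|^2$ (made sharp by the implicit Euler scheme), one gets
\begin{equation*}
\cE^{n+1} - \cE^n + \frac{\Delta t}{\eps\,\tau_0}\sum_{k=1}^{N_H} k\,\|D_k^{n+1}\|^2_{l^2(\T)} \,\leq\, 0,
\end{equation*}
which is the direct analog of~\eqref{estim:L2} and is degenerate in $D_0$ and in the field. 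Next I would compute the increment of the corrector $\beta_0\langle\cA_h^\star D_1^{n+1},u_h^{n+1}\rangle_{l^2(\T)}$. Using the equation for $D_1^{n+1}$, the defining relation $\cA_h^\star\cA_h u_h^{n+1}=D_0^{n+1}-\sqrt{\rho}_\infty$ together with the equation for $D_0^{n+1}$, one expects a leading term of the form $-(\beta_0\Delta t/\eps)\,\|\cA_h u_h^{n+1}\|^2_{l^2(\T)}$. By a discrete Poincaré-type inequality adapted to the weight $\sqrt{\rho}_\infty$, with constant depending only on $|b-a|$, $T_0$ and $\rho_i$, this quantity controls $\|D_0^{n+1}-\sqrt{\rho}_\infty\|^2_{l^2(\T)}$ and, via the Poisson equation, $\|\cA_h\omega_h^{n+1}/\sqrt{\rho}_\infty\|^2_{l^2(\T)}$, which is exactly the missing piece of coercivity.

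The remainder terms produced by the corrector, namely the contributions from $\sqrt{2}\,\cA_h^\star D_2^{n+1}$, from $\cA_h\omega_h^{n+1}$ in the $D_1$-equation, and from the collision term $D_1^{n+1}/\tau_0$, can be absorbed by Young's inequality and by the already-available dissipation on $\sum k\,\|D_k^{n+1}\|_{l^2(\T)}^2$, provided $\beta_0$ is chosen small enough as a function of $T_0$, $\rho_i$ and $|b-a|$. For such $\beta_0$ one obtains simultaneously the spectral equivalence $\tfrac{1}{3}\cE^n\leq\cH^n\leq 3\,\cE^n$, which explains the prefactor $3$ in the statement, and a one-step discrete inequality
\begin{equation*}
\cH^{n+1} - \cH^n \,\leq\, -\frac{\kappa}{\eps}\,\min(\tau_0,\tau_0^{-1})\,\Delta t\,\cH^{n+1},
\end{equation*}
which iterates geometrically into the announced bound. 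The factor $\min(\tau_0,\tau_0^{-1})$ arises from balancing the two regimes: for small $\tau_0$ the collisional dissipation dominates, whereas for large $\tau_0$ coercivity is supplied by the hypocoercive corrector.

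The hard part will be to carry out these cancellations uniformly in the scaling parameter $\eps$ and in the mesh size $h$. Two points are genuinely delicate: controlling the coupling term $\cA_h\omega_h^{n+1}$ that appears in the $D_1$-equation without losing the $1/\eps$ scaling, which forces one to re-express it through the Poisson identity in terms of $\cA_h u_h^{n+1}$ so that it merges with the leading negative term; and producing a discrete Poincaré inequality for the operator $\cA_h^\star\cA_h$ under the weighted zero-mean condition, with a constant depending only on the listed geometric and physical quantities. Both rely crucially on the well-balanced identity~\eqref{preserves:equi:h}, which is why the construction of $\cA_h$ in~\eqref{def:Ah}–\eqref{def:Ei} was tailored to preserve the equilibrium at the discrete level.
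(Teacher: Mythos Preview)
Your overall architecture is right, but two concrete points diverge from what actually makes the argument close.

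First, the leading terms produced by the corrector are not what you expect, and this matters. When you substitute the $D_1$-equation into $\langle\cA_h^\star(D_1^{n+1}-D_1^n)/\Delta t,\,u_h^{n+1}\rangle$, the transport contribution is $-\tfrac{1}{\eps}\langle\cA_h^\star\cA_h(D_0^{n+1}-\sqrt{\rho}_\infty),u_h^{n+1}\rangle$, and by the very definition of $u_h^{n+1}$ this equals $-\tfrac{1}{\eps}\|D_0^{n+1}-\sqrt{\rho}_\infty\|_{l^2(\T)}^2$ \emph{directly}; no discrete Poincar\'e inequality is needed, and the quantity $\|\cA_h u_h^{n+1}\|^2$ never appears as the main dissipation. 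Likewise, the Poisson coupling $\cA_h\omega_h^{n+1}$ in the $D_1$-equation is not a nuisance to be absorbed: pairing it with $u_h^{n+1}$ and using $\cA_h^\star\cA_h u_h^{n+1}=\cA_h^\star\rho_\infty^{-1}\cA_h\omega_h^{n+1}$ gives exactly $-\tfrac{1}{\eps}\|\cA_h\omega_h^{n+1}/\sqrt{\rho}_\infty\|^2_{l^2(\T)}$, i.e.\ another good-sign term. So both missing pieces of coercivity fall out algebraically, not through a Poincar\'e estimate.

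Second, and this is a genuine gap, you throw away the numerical dissipation by writing the energy step as an inequality. The paper keeps it as an \emph{equality} with an explicit nonnegative remainder $\Delta t\,\cR_h^n=\tfrac{1}{2\Delta t}\|D^{n+1}-D^n\|_{l^2}^2+\tfrac{1}{2\Delta t}\|\cA_h(\omega_h^{n+1}-\omega_h^n)/\sqrt{\rho}_\infty\|^2_{l^2(\T)}$. This is essential: when you differentiate the corrector you inevitably produce the cross-term $\langle D_1^n-D_1^{n+1},\,\cA_h(u_h^{n+1}-u_h^n)/\Delta t\rangle$, which via the elliptic estimate on $u_h$ is bounded by $\tfrac{C}{\Delta t}\|D^{n+1}-D^n\|_{l^2}^2$. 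This term carries no $1/\eps$ factor and cannot be absorbed by the Fokker--Planck dissipation; it is controlled precisely by the remainder $\cR_h^n$ for $\beta_0$ small. If you discard $\cR_h^n$ at the energy step, the estimate does not close uniformly in $\eps$ and $\Delta t$.
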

To show this result, we couple a discrete version of the free energy
estimate in Proposition \ref{prop:2.1} with hypocoercive
estimates for the discrete version of the modified relative entropy functional defined in \eqref{eq:H0}.\\

Before to give the proof, let us comment  this  result. On the one hand, we emphasize
that the rate of convergence is uniform with respect to discretization
parameters.
% In particular, this result ensures that the scheme is unconditionally stable, regardless the value of $\eps$.
On the other hand, the convergence rate is proportional to $1/\eps$,
regardless of discretization parameters. This last property ensures that the scheme for the linearized model is asymptotic preserving in the long time regime. It also ensures that our method applied to the linearized model is unconditionally stable, regardless of both scaling and discretization parameters. To conclude, we emphasize that the explicit dependence with respect to $\tau_0$ is coherent with the results for the continuous model \cite{Herda_Rodrigues}.

\subsection{{\it A priori} estimates}
In this section, we prove a discrete free energy
estimate on $(\cE^n)_{n\in\N}$ analogous to the one in Proposition \ref{prop:2.1}. 
\begin{proposition}
  \label{energy:estimate}
Consider the solution $(D^n)_{n\in\N}$ to \eqref{discrete:lin}. The following discrete energy estimate holds for all $n\geq0$
\[
\frac{\cE^{n+1}
	-\cE^{n}}{\Delta t}
\,+\,
\Delta t\,\cR_h^n\,=
\,-\,\frac{1}{\eps\, \tau_0}\,\sum_{k=1}^{N_H} k\,\left\|D^{n+1}_k\right\|^2_{l^2\left(\T\right)}\,,
\]
where $\cR_h^n$ is the following positive remainder due to numeric dissipation
\[
\cR_h^n\,=\,
\frac{1}{2}
\left(
\left\|
\frac{D^{n+1}-D^{n}}{\Delta t}
\right\|^2_{l^2}
+
\left\|
\frac{\cA_h\left(\omega_h^{n+1}-\omega_h^{n}\right)}{\Delta t\,\sqrt{\rho}_{\infty}}
\right\|^2_{l^2\left(\T\right)}
\right)\,.
\]
\end{proposition}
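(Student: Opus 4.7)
The plan is to mirror the continuous proof of Proposition \ref{prop:2.1} at the discrete level. The continuous time derivative is replaced by the elementary identity $a(a-b) = \tfrac{1}{2}(a^2 - b^2) + \tfrac{1}{2}(a-b)^2$, which is precisely what generates the numerical dissipation remainder $\cR_h^n$, while the continuous integration by parts is replaced by the discrete duality between $\cA_h$ and $\cA_h^\star$ in $l^2(\T)$, together with the equilibrium-preservation identity $\cA_h\sqrt{\rho}_\infty = 0$ from \eqref{preserves:equi:h}.

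The first step is to take the $l^2(\T)$-inner product of each equation in \eqref{discrete:lin} with $D_k^{n+1} - D_{\infty,k}$ and sum from $k=0$ to $k=N_H$. Since $D_\infty$ is time-independent, the elementary identity applied to the time-increment terms produces the $\|D^{n+1}-D_\infty\|_{l^2}^2$ increment together with the squared difference $\|D^{n+1}-D^n\|_{l^2}^2$, which supplies the kinetic half of the left-hand side of the claimed identity; the collision terms $-(k/\tau_0)\|D_k^{n+1}\|_{l^2(\T)}^2$ directly produce the right-hand side.

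The transport contributions $\sqrt{k}\cA_h D_{k-1}^{n+1}$ and $-\sqrt{k+1}\cA_h^\star D_{k+1}^{n+1}$ telescope pairwise through the adjoint identity $\langle \cA_h u, w\rangle_{l^2(\T)} = \langle u, \cA_h^\star w\rangle_{l^2(\T)}$. The only delicate case is $k=0$: testing against $D_0^{n+1}-\sqrt{\rho}_\infty$ yields $\langle \cA_h^\star D_1^{n+1}, D_0^{n+1}-\sqrt{\rho}_\infty\rangle_{l^2(\T)}$, in which the constant-shift piece vanishes precisely because $\cA_h\sqrt{\rho}_\infty = 0$, leaving exactly $\langle D_1^{n+1}, \cA_h D_0^{n+1}\rangle_{l^2(\T)}$ which cancels the corresponding contribution coming from the $k=1$ equation. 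After all such cancellations, the only surviving coupling term is $\langle \cA_h\omega_h^{n+1}, D_1^{n+1}\rangle_{l^2(\T)}$ from the $k=1$ equation.

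To treat this remaining term, I would use duality together with the $k=0$ equation $\cA_h^\star D_1^{n+1} = \eps(D_0^{n+1}-D_0^n)/\Delta t$ to rewrite $\langle \cA_h\omega_h^{n+1}, D_1^{n+1}\rangle_{l^2(\T)} = (\eps/\Delta t)\,\langle \omega_h^{n+1}, D_0^{n+1}-D_0^n\rangle_{l^2(\T)}$. Subtracting the discrete Poisson equation at consecutive time levels gives $D_0^{n+1}-D_0^n = \cA_h^\star\,\rho_\infty^{-1}\,\cA_h(\omega_h^{n+1}-\omega_h^n)$, and a final application of duality combined with the same convex identity converts this term into the $\|\cA_h\omega_h/\sqrt{\rho}_\infty\|_{l^2(\T)}^2$ increment plus the remaining part of $\cR_h^n$. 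Dividing through by $\eps$ then yields the stated identity. The main obstacle is not any single estimate but the bookkeeping: the asymmetry introduced by the $D_\infty$ shift at $k=0$ must be reconciled with the equilibrium-preservation property of the scheme in order for every transport term to cancel without residue, and one must verify carefully that the convex identity is applied consistently so that both halves of $\cR_h^n$ emerge with the correct sign.
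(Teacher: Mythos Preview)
Your proposal is correct and follows essentially the same approach as the paper's proof: test \eqref{discrete:lin} against $D_k^{n+1}-D_{\infty,k}$, use the convex identity $a(a-b)=\tfrac12(a^2-b^2)+\tfrac12(a-b)^2$ to generate both the energy increment and the numerical dissipation, telescope the transport terms via the $\cA_h/\cA_h^\star$ duality together with $\cA_h\sqrt{\rho_\infty}=0$, and finally convert the remaining field term $\langle\cA_h\omega_h^{n+1},D_1^{n+1}\rangle_{l^2(\T)}$ into the electric-energy increment by combining the $k=0$ equation with the discrete Poisson relation at consecutive time levels. The paper organizes the computation into two pieces $\cI_1$ and $\cI_2$ but the underlying manipulations are identical to yours.
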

\begin{proof}
	To compute the variations of $\left\|
	D^{n}-D_{\infty}
	\right\|^2_{l^2}$ between time step $n$ and $n+1$, we take the $l^2(\T)$ scalar product between the first line in \eqref{discrete:lin} and $
	D^{n+1}_1-D_{\infty,1}$, the second and $
	D^{n+1}_k-D_{\infty,k}$ and sum over all $k \in \{0,\ldots,N_H\}$, this yields
\begin{equation*}
	\frac{1}{2\,\Delta t}
	\left(
	\left\|
	D^{n+1}-D_{\infty}
	\right\|^2_{l^2}
	-\left\|
	D^{n}-D_{\infty}
	\right\|^2_{l^2}
	+
	\left\|
	D^{n+1}-D^{n}
	\right\|^2_{l^2}\right)
	\,=\,\cI_1\,+\,\cI_2\,,
\end{equation*}
where $\cI_1$ and $\cI_2$ are given by
\be
\left\{
\begin{array}{l}
	\ds \cI_1 :=-\,\frac{1}{\eps\, \tau_0}\sum_{k=0}^{N_H} k\left\|D^{n+1}_k\right\|^2_{l^2\left(\T\right)}
	-\frac{1}{\eps}\sum_{k=0}^{N_H}
	\left\langle\sqrt{k}\,
	\mathcal{A}_{h}\,D^{n+1}_{k-1}
	-\sqrt{k+1} \,\mathcal{A}_{h}^\star D^{n+1}_{k+1},D^{n+1}_{k}-D_{\infty,k}\right\rangle_{l^2\left(\T\right)}
	\,,
	\\[1.3em]
	\ds\cI_2 :=- \frac{1}{\eps}\left\langle
	\cA_h\omega_h^{n+1},D^{n+1}_{1} \right\rangle_{l^2\left(\T\right)}\,,
\end{array}\right.
\ee
where $\cI_2$ stands for the contribution of the electric field and $\cI_1$ gathers all the other terms.\\

First, we rewrite $\cI_1$ using that $\cA_h$ and $\cA_h^\star$ are adjoint in $l^2(\T)$ and that $\cA_h\,D_{\infty,0}=0$ according to \eqref{preserves:equi:h}
\[
 \cI_1 \,=\,-\,\frac{1}{\eps\, \tau_0}\,\sum_{k=0}^{N_H} k\,\left\|D^{n+1}_k\right\|^2_{l^2\left(\T\right)}
\,-\,\frac{1}{\eps}\,\sum_{k=0}^{N_H}
\sqrt{k}\left\langle
\mathcal{A}_{h}\,D^{n+1}_{k-1}
\,,\,D^{n+1}_{k}\right\rangle_{l^2\left(\T\right)}
-\sqrt{k+1}
\left\langle D^{n+1}_{k+1}\,,\,\mathcal{A}_{h} D^{n+1}_{k}\right\rangle_{l^2\left(\T\right)}
\,.
\]
Hence, splitting and re-indexing the second sum in the latter relation, we see that it is in fact zero. Therefore, $\cI_1$ rewrites as follows
\[
\cI_1 \,=\,-\,\frac{1}{\eps\, \tau_0}\,\sum_{k=0}^{N_H} k\,\left\|D^{n+1}_k\right\|^2_{l^2\left(\T\right)}
\,.
\]

Furthermore, considering the case $k=0$ in the second equation of system \eqref{discrete:lin}, we deduce that $\cI_2$ rewrites as follows
\[
\cI_2 \,=\,-\, \left\langle
\omega_h^{n+1}  \,,\,\frac{D^{n+1}_{0} -D_{0}^{n} }{\Delta t} \right\rangle_{l^2\left(\T\right)}\,.
\]
Then, we replace $D^{n+1}_{0} -D_{0}^{n}$ in the latter relation using the third line in \eqref{discrete:lin}, it yields
\[
\cI_2 \,=\,-\, \left\langle
\omega_h^{n+1}  \,,\,(\cA_h^\star\,\rho_{\infty}^{-1}\,\cA_h) \,\frac{\omega_h^{n+1} -\omega_h^{n} }{\Delta t} \right\rangle_{l^2\left(\T\right)}\,.
\]
Using that $\cA_h$ and $\cA^\star_h$ are adjoint in $l^2(\T)$, we deduce the following relation
\[
\cI_2 \,=\,-
\frac{1}{2\,\Delta t}
\left(
\left\|
\frac{\cA_h \,\omega_h^{n+1}}{\sqrt{\rho}_{\infty}}
\right\|^2_{l^2\left(\T\right)}
-\left\|
\frac{\cA_h \,\omega_h^{n}}{\sqrt{\rho}_{\infty}}
\right\|^2_{l^2\left(\T\right)}
+
\left\|
\frac{\cA_h\left(\omega_h^{n+1}-\omega_h^{n}\right)}{\sqrt{\rho}_{\infty}}
\right\|^2_{l^2\left(\T\right)}\right) \,,
\]
which concludes the proof.
\end{proof}

\subsection{Proof of Theorem \ref{th:3.1}}
We are now ready to proceed to the proof of Theorem \ref{th:3.1}. To do so, we develop a discrete hypocoercive technique which consists in introducing the analog $\cH^n$ of the continuous modified entropy functional defined in \eqref{eq:H0}. It reads as follows
\begin{equation}
	\cH^n \,=\,
	\cE^n\,+\, \beta_0\,\left\langle
	\cA_h^\star\, D_1^n, u_h^n\right\rangle_{l^2\left(\T\right)}\,,
	\label{eq:H0:discrete}
\end{equation}
where $\beta_0$ is a positive constant and where $u_h^n$ is solution the solution to
\be\label{eq:elliptic:h}
\left\{
\begin{array}{l}
	\ds \cA_h^\star\,\cA_h \,u_h^{n}\, =\, D_0^n\,-\,\sqrt{\rho}_{\infty}\,,
	\\[1.1em]
	\ds\sum_{j\in\J}\Delta x_j \, u_j^n \,\sqrt\rho_{\infty,j}\,=\,0\,.
\end{array}\right.
\ee
Let us first recall useful estimates on $u^n_h$ \cite[Lemma $3.5$]{BF_09_22}
\begin{lemma}
	\label{lem:3.3}
	For each $N_H\in\N^*$,  $N_x$ an odd number and $n\in\N$, the solution $u^n_h$ to \eqref{eq:elliptic:h} satisfies
	\begin{equation}\label{useful:estimates:h}
		\left\{
		\begin{array}{l}
			\ds \|\cA_h^2\, u^n_h\|_{l^2(\T)}\,+\,\|\cA_h\, u^n_h\|_{l^2(\T)} \,\leq\, C\,\|D_0^n\,-\,\sqrt{\rho}_{\infty}\|_{l^2(\T)}\,,
			\\[1.1em]
			\ds\left\|\cA_h \left(\frac{u_h^{n+1}-u_h^n}{\Delta t}\right)\right\|_{l^2(\T)} \,\leq\, \min{\left(\frac{1}{\eps}\,\left\|D_1^{n+1}\right\|_{l^2(\T)}\,,\,
				C\left\|\frac{D_0^{n+1}-D_0^{n}}{\Delta t}\right\|_{l^2(\T)}
				\right)}\,,
		\end{array}\right.
	\end{equation}
	for some constant $C>0$ only depending on $\rho_i$ and $T_0$.
\end{lemma}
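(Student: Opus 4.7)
The plan is to leverage standard elliptic-type arguments for the discrete operator $\cA_h^\star \cA_h$ (self-adjoint thanks to \cite[Lemma $3.3$]{BF_09_22}), combined with a discrete weighted Poincar\'e inequality and a commutator identity that together mimic the proof of the analogous continuous estimates.

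For the first inequality, I would begin by testing \eqref{eq:elliptic:h} against $u_h^n$ and using the adjointness of $\cA_h$ and $\cA_h^\star$ in $l^2(\T)$ to obtain
\[
\|\cA_h u_h^n\|_{l^2(\T)}^2 \,=\, \langle D_0^n - \sqrt{\rho}_{\infty}\,,\, u_h^n\rangle_{l^2(\T)}\,.
\]
Cauchy--Schwarz then reduces the bound on $\|\cA_h u_h^n\|_{l^2(\T)}$ to a discrete weighted Poincar\'e inequality $\|u_h^n\|_{l^2(\T)} \leq C\|\cA_h u_h^n\|_{l^2(\T)}$, which holds because the normalization $\sum_j \Delta x_j u_j^n \sqrt{\rho}_{\infty,j} = 0$ forces $u_h^n$ to be orthogonal to $\ker \cA_h = \mathrm{span}\{\sqrt{\rho}_\infty\}$ (cf.\ \eqref{preserves:equi:h}), with a constant depending only on $\rho_i$, $T_0$ and $|b-a|$. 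For the second-order bound $\|\cA_h^2 u_h^n\|_{l^2(\T)}$, I would apply the commutator identity
\[
\|\cA_h v\|^2_{l^2(\T)} - \|\cA_h^\star v\|^2_{l^2(\T)} \,=\, \langle [\cA_h^\star,\cA_h]\, v,\, v\rangle_{l^2(\T)}
\]
to $v = \cA_h u_h^n$. At the continuous level a direct calculation gives $[\cA^\star,\cA] = \partial_x E_\infty$, an $L^\infty$ multiplier, and its discrete counterpart inherits the same uniform bound from \eqref{def:Ei} thanks to the regularity of $\rho_\infty$ and $\rho_i$. Since $\|\cA_h^\star v\|_{l^2(\T)} = \|D_0^n - \sqrt{\rho}_\infty\|_{l^2(\T)}$ comes directly from \eqref{eq:elliptic:h} and $\|v\|_{l^2(\T)} = \|\cA_h u_h^n\|_{l^2(\T)}$ has just been controlled, we conclude the desired bound on $\|\cA_h^2 u_h^n\|_{l^2(\T)}$.

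For the time-difference estimate, I would first take the discrete time increment of \eqref{eq:elliptic:h} to get
\[
\cA_h^\star \cA_h \,\frac{u_h^{n+1}-u_h^n}{\Delta t} \,=\, \frac{D_0^{n+1}-D_0^n}{\Delta t}\,,
\]
and testing with $(u_h^{n+1}-u_h^n)/\Delta t$ together with the same Poincar\'e inequality (the increment still satisfies the zero-mean constraint since $u_h^{n+1}$ and $u_h^n$ both do) yields the bound involving $\|(D_0^{n+1}-D_0^n)/\Delta t\|_{l^2(\T)}$. For the sharper bound involving $\|D_1^{n+1}\|_{l^2(\T)}/\eps$, I would use the $k=0$ equation from \eqref{discrete:lin}, namely $\eps(D_0^{n+1}-D_0^n)/\Delta t = \cA_h^\star D_1^{n+1}$, to rewrite the time-incremented elliptic equation as
\[
\cA_h^\star\left(\cA_h \,\frac{u_h^{n+1}-u_h^n}{\Delta t} \,-\, \frac{1}{\eps}\, D_1^{n+1}\right) \,=\, 0\,.
\]
Thus $\cA_h(u_h^{n+1}-u_h^n)/\Delta t$ and $D_1^{n+1}/\eps$ differ by an element of $\ker\cA_h^\star = (\mathrm{range}\,\cA_h)^\perp$; since $\cA_h(u_h^{n+1}-u_h^n)/\Delta t$ itself lies in $\mathrm{range}\,\cA_h$, it must be the orthogonal projection of $D_1^{n+1}/\eps$ onto $\mathrm{range}\,\cA_h$, whence its norm is no larger than $\|D_1^{n+1}\|_{l^2(\T)}/\eps$. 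Taking the minimum of the two bounds closes the proof.

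The main obstacle is the discrete weighted Poincar\'e inequality with a constant independent of the mesh. A clean path is to exploit the factorized form $\cA_h u \sim \sqrt{T_0\rho_{\infty}}\,\delta_h(u/\sqrt{\rho}_\infty)$ for a suitable discrete central difference $\delta_h$, thereby reducing the inequality to a standard finite-volume Poincar\'e estimate for the zero-mean function $u/\sqrt{\rho}_\infty$; one must then absorb the weights $\sqrt{\rho}_\infty$ and $1/\sqrt{\rho}_\infty$ using uniform upper and lower bounds on $\rho_\infty$ derived from \eqref{def:rho_inf} in terms of $\rho_i$, $T_0$ and $|b-a|$. A secondary but important point is that the discrete commutator $[\cA_h^\star,\cA_h]$ must still act as a uniformly bounded multiplier; this follows because, as noted in \eqref{def:Ei}, the discretization of $E_\infty$ is a consistent central difference of $\phi_\infty$, which preserves the $L^\infty$ regularity of the continuous commutator.
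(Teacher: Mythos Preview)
The paper does not actually prove this lemma here: it is stated with the phrase ``Let us first recall useful estimates on $u^n_h$ \cite[Lemma $3.5$]{BF_09_22}'' and the proof is deferred entirely to that reference. Your proposal is therefore not directly comparable to anything in the present paper, but it is a correct and well-structured argument that follows the natural route one would expect in the cited work: energy testing plus a discrete weighted Poincar\'e inequality for $\|\cA_h u_h^n\|$, a commutator identity for $\|\cA_h^2 u_h^n\|$, and the orthogonal-projection argument for the sharp time-difference bound with constant $1/\eps$.

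Two small technical remarks. First, the discrete commutator $[\cA_h^\star,\cA_h]$ is not literally a pointwise multiplier: a direct computation shows it couples $D_{j-1}$ and $D_{j+1}$ through the differences $E_{\infty,j\pm1}-E_{\infty,j}$, so the clean continuous identity $[\cA^\star,\cA]=\partial_x E_\infty$ is only recovered up to a bounded nonlocal perturbation. Its operator norm is nevertheless controlled by the Lipschitz constant of $E_\infty$ (plus, on a non-uniform mesh, a harmless factor involving ratios $\Delta x_{j\pm1}/\Delta x_j$), so your bound still goes through. Second, your Poincar\'e constant depends on $|b-a|$ in addition to $\rho_i$ and $T_0$, which is slightly more than the lemma claims; this is likely an imprecision in the statement rather than a flaw in your argument, since the main Theorem~\ref{th:3.1} explicitly allows dependence on $|b-a|$.
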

Among other, the latter Lemma ensures that the modified relative entropy functional $\cH^n$ is in fact equivalent to the energy $\cE^n$ for $\beta_0$ small enough.
\bl
For all $\rho_i$ and $T_0>0$, there exists a positive constant $\overline{\beta}_0$ such that for all
$\beta_0\in(0,\overline{\beta}_0)$, one has
\begin{equation}
	\label{disc:equiv}
	\frac{1}{4}\,\cE^n \leq\,\cH^n\,\leq
	\,\frac{3}{4}\,\cE^n\,,\quad \forall\, n\in\N,
\end{equation}
where $\cE^n$ and $\cH^n$ are given by \eqref{Energy} and
\eqref{eq:H0:discrete}  with $N_H\in\N^*$ and $N_x$ is an odd number.
\label{lem:2disc}
\el
\begin{proof}
  Since $N_x$ is an odd number, we know from \cite{BF_09_22} that the
  operator $\cA_h$ satisfies a discrete Poincar\'e inequality. Then, since $\cA_h$ and $\cA^{\star}_h$ are adjoint in $l^2(\T)$, applying Cauchy-Schwarz inequality and the first line in \eqref{useful:estimates:h}, we obtain
\[
\left|
\left\langle
\cA_h^\star\, D_1^n, u_h^n\right\rangle_{l^2\left(\T\right)}\right|
\leq C\,\|D^n\,-\,D_{\infty}\|_{l^2}^2\,,
\]
which allows to bound the additional term in the definition of $\cH^n$ and therefore conclude the proof.
\end{proof}
Building on the latter lemmas, we now prove that $\cH^n$ verifies a dissipation relation
\begin{proposition}
	\label{prop:3.4}
	For  $N_H\in\N^*$ and $N_x$ an odd number, consider the solution $(D^n)_{n\in\N}$ to \eqref{discrete:lin}. The modified relative entropy functional defined by \eqref{eq:H0:discrete} verifies for all $n\geq 0$
	\[
	\frac{\cH^{n+1}
		-\cH^{n}}{\Delta t}\;
	\leq\;
	\,-\,\frac{\kappa}{\eps}\,\min{\left(\tau_0,\tau_0^{-1}\right)}\,\cH^{n+1}
	\,,
	\]
	for some positive constant $\kappa$ depending only on $\rho_i$ and $T_0$ and $|b-a|$.
\end{proposition}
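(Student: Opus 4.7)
The strategy is a discrete hypocoercive argument. The modified functional \eqref{eq:H0:discrete} augments $\cE^n$ with the cross term $\beta_0 T^n$, where $T^n := \langle \cA_h^\star D_1^n, u_h^n\rangle_{l^2(\T)}$ and $u_h^n$ solves the discrete Poisson problem \eqref{eq:elliptic:h}; the point of this cross term is that its discrete time difference produces precisely the coercive contributions on $D_0^{n+1} - \sqrt{\rho_\infty}$ and $\cA_h\omega_h^{n+1}/\sqrt{\rho_\infty}$ that are absent from the dissipation of Proposition \ref{energy:estimate}. Starting from
\[
\frac{\cH^{n+1} - \cH^n}{\Delta t} \,=\, \frac{\cE^{n+1} - \cE^n}{\Delta t} \,+\, \beta_0\,\frac{T^{n+1} - T^n}{\Delta t},
\]
I would apply Proposition \ref{energy:estimate} to the first piece and split the second using
\[
T^{n+1} - T^n \,=\, \langle D_1^{n+1} - D_1^n,\, \cA_h u_h^{n+1}\rangle_{l^2(\T)} \,+\, \langle D_1^n,\, \cA_h(u_h^{n+1} - u_h^n)\rangle_{l^2(\T)},
\]
after transferring $\cA_h^\star$ onto $u_h$ by duality.

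Substituting the discrete equation for $D_1$ from \eqref{discrete:lin} into the first bracket and dividing by $\Delta t$, the $\cA_h D_0^{n+1}$ and $\cA_h\omega_h^{n+1}$ contributions yield, thanks to the elliptic identities $\cA_h^\star\cA_h u_h^{n+1} = D_0^{n+1} - \sqrt{\rho_\infty}$ and $\cA_h^\star\rho_\infty^{-1}\cA_h\omega_h^{n+1} = D_0^{n+1} - \sqrt{\rho_\infty}$ (from \eqref{discrete:lin}--\eqref{eq:elliptic:h}) together with discrete mass conservation (obtained by testing the $k = 0$ equation of \eqref{discrete:lin} against $\sqrt{\rho_\infty}$ and using \eqref{preserves:equi:h}), exactly the coercive contribution
\[
-\frac{\beta_0}{\eps}\,\bigl(\|D_0^{n+1} - \sqrt{\rho_\infty}\|^2_{l^2(\T)} + \|\cA_h\omega_h^{n+1}/\sqrt{\rho_\infty}\|^2_{l^2(\T)}\bigr).
\]
The remaining $\sqrt{2}\,\cA_h^\star D_2^{n+1}$ and $D_1^{n+1}/\tau_0$ terms produce cross products which, using Cauchy-Schwarz and the first line of Lemma \ref{lem:3.3} (bounding $\|\cA_h u_h^{n+1}\| + \|\cA_h^2 u_h^{n+1}\|$ by $C\|D_0^{n+1} - \sqrt{\rho_\infty}\|$), take the form $C\beta_0/\eps \cdot \|D_k^{n+1}\|\|D_0^{n+1} - \sqrt{\rho_\infty}\|$ for $k = 1, 2$. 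By Young's inequality each is split between a small fraction of the coercive norm (absorbed into the quantity above) and a $\|D_k^{n+1}\|^2$ piece, which is absorbed into the dissipation $-k/(\eps\tau_0)\|D_k^{n+1}\|^2$ from Proposition \ref{energy:estimate}. The second bracket is handled analogously via the first bound in the second line of Lemma \ref{lem:3.3}, yielding a contribution of order $\beta_0/\eps \cdot \|D_1^n\|\|D_1^{n+1}\|$.

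Choosing $\beta_0 = c_0\min(\tau_0,\tau_0^{-1})$ with $c_0$ depending only on $\rho_i, T_0, |b-a|$ renders all absorptions uniform in the scaling and discretization parameters, and yields, after combining all surviving contributions and invoking $\cE^{n+1} \geq (4/3)\cH^{n+1}$ from Lemma \ref{lem:2disc}, the dissipation estimate $\cH^{n+1} - \cH^n \leq -(\kappa/\eps)\min(\tau_0,\tau_0^{-1})\Delta t\,\cH^{n+1}$. The main technical difficulty lies in the $\|D_1^n\|^2$ residue from the second bracket, which sits at time level $n$ rather than $n+1$ and has no continuous analogue; the scaling $\beta_0 \propto \min(\tau_0,\tau_0^{-1})$ is in fact dictated precisely by the requirement that this residue, when combined with the $-\Delta t\,\cR_h^n$ bonus from Proposition \ref{energy:estimate} and the elementary bound $\|D_1^n\|^2 \leq 2\|D_1^{n+1}\|^2 + 2\|D_1^{n+1} - D_1^n\|^2$, be absorbed into the other dissipation terms without imposing a CFL-like condition.
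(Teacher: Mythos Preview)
Your overall architecture matches the paper's: same splitting of the discrete time derivative of $T^n=\langle \cA_h^\star D_1^n,u_h^n\rangle$, same identification of the two coercive pieces via the elliptic identities, same use of Lemma~\ref{lem:3.3} and Proposition~\ref{energy:estimate}, and the same eventual scaling $\beta_0\sim\min(\tau_0,\tau_0^{-1})$. (The invocation of mass conservation is unnecessary --- the identity $\cA_h\sqrt{\rho_\infty}=0$ and the two elliptic equations already give $\cJ_1$ and $\cJ_2$ directly --- but it is harmless.)

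There is, however, a genuine gap in your treatment of the second bracket $\langle D_1^n,\cA_h(u_h^{n+1}-u_h^n)/\Delta t\rangle$. Using only the first bound $\|\cA_h(u_h^{n+1}-u_h^n)/\Delta t\|\le \eps^{-1}\|D_1^{n+1}\|$ from Lemma~\ref{lem:3.3} gives, after your elementary splitting $\|D_1^n\|^2\le 2\|D_1^{n+1}\|^2+2\|D_1^{n+1}-D_1^n\|^2$, a residue of size $\beta_0\eps^{-1}\|D_1^{n+1}-D_1^n\|^2$. The numerical dissipation available from Proposition~\ref{energy:estimate} is $\Delta t\,\cR_h^n\ge \tfrac{1}{2\Delta t}\|D^{n+1}-D^n\|^2$, so absorption forces $\beta_0\,\Delta t\lesssim \eps$. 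Since $\beta_0$ depends only on $\tau_0$, this is precisely the CFL-type restriction you claim to avoid, and it destroys the asymptotic-preserving property.

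The paper repairs this by splitting \emph{before} applying Lemma~\ref{lem:3.3}: writing $D_1^n=D_1^{n+1}+(D_1^n-D_1^{n+1})$, the $D_1^{n+1}$ piece is estimated with the first bound $\eps^{-1}\|D_1^{n+1}\|$, but the difference piece $\langle D_1^n-D_1^{n+1},\cA_h(u_h^{n+1}-u_h^n)/\Delta t\rangle$ is estimated with the \emph{second} bound $\|\cA_h(u_h^{n+1}-u_h^n)/\Delta t\|\le C\|(D_0^{n+1}-D_0^n)/\Delta t\|$. This produces $C\beta_0\,\Delta t^{-1}\|D^{n+1}-D^n\|^2$, which matches the scale of $\Delta t\,\cR_h^n$ and is absorbed under the parameter-free condition $\beta_0\le 1/(2C)$. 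The scaling $\beta_0\sim\min(\tau_0,\tau_0^{-1})$ is dictated by the $\tau_0^{-1}$ cross term and the final rate, not by the time-level-$n$ residue.
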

\begin{proof}
We first focus on the additional term $\ds \left\langle
\cA_h^\star D_1^n, u_h^n\right\rangle_{l^2\left(\T\right)}$ in the definition of $\cH^n$.
We write its discrete time derivative as follows
\[
\frac{1}{\Delta t}
\left(
\left\langle
\cA_h^\star D_1^{n+1}, u_h^{n+1}\right\rangle_{l^2\left(\T\right)}
-
\left\langle
\cA_h^\star D_1^n, u_h^n\right\rangle_{l^2\left(\T\right)}
\right)=
\left\langle
\cA_h^\star \frac{D^{n+1}_1- D^{n}_1}{\Delta t}, u_h^{n+1}\right\rangle_{l^2\left(\T\right)}
+
\left\langle
\cA_h^\star D_1^n, \frac{u^{n+1}_h- u^{n}_h}{\Delta t}\right\rangle_{l^2\left(\T\right)}
\]
and replace the discrete time derivative of $D_1^n$ in the latter right-hand side thanks to the first line in \eqref{discrete:lin}
	\begin{equation}\label{time:der:additional:term}
	\frac{1}{\Delta t}
	\left(
	\left\langle
	\cA_h^\star D_1^{n+1}, u_h^{n+1}\right\rangle_{l^2\left(\T\right)}
	\,-\,
	\left\langle
	\cA_h^\star D_1^n, u_h^n\right\rangle_{l^2\left(\T\right)}
	\right)
	\,=\,
	\cJ_1
	\,+\,
	\cJ_2
	\,+\,\cJ_3\,,
	\end{equation}
	where $\cJ_1$, $\cJ_2$ and $\cJ_3$ are given by
	$$
	\left\{\begin{array}{ll}
		\ds\cJ_1\,:=\,
		&\ds-\,\frac{1}{\eps}\,\left\langle\cA_h^\star\cA_h\,\left(D^{n+1}_0-\sqrt\rho_{\infty}\right) 
		,\, u_h^{n+1}\right\rangle_{l^2\left(\T\right)}\,,\\[1.2em]
		\ds \cJ_2\,:=\,&\ds -\,\frac{1}{\eps}\left\langle \cA_h^\star
		\, \cA_h
		\, \omega^{n+1}_h,\,u^{n+1}_h\right\rangle_{l^2\left(\T\right)}
		\,,
		\\[1.3em]
		\ds\cJ_3\,:=\,&\ds
		+\,\frac{1}{\eps}\,\left\langle
		\sqrt{2}\,(\cA_h^\star)^2\, D_2^{n+1}
		\,-\,\frac{1}{\tau_0}\,\cA_h^\star\, D^{n+1}_1
		,\, u_h^{n+1}\right\rangle_{l^2\left(\T\right)}
		\,+\,
		\left\langle \cA_h^\star
		\, D^{n}_1,\,\frac{u^{n+1}_h- u^{n}_h}{\Delta t}\right\rangle_{l^2\left(\T\right)}
		\,.
	\end{array}\right.
	$$
First, $\cJ_1$ is the desired dissipation term: since $\cA_h$ and $\cA^\star_h$ are adjoint in $l^2(\T)$ and according to \eqref{eq:elliptic:h}, it holds
	\[
	\cJ_1\,=\,
	-\,\frac{1}{\eps}\,\left\langle\,D^{n+1}_0-\sqrt\rho_{\infty}
	,\, \cA_h^\star\cA_h\, u_h^{n+1}\right\rangle_{l^2\left(\T\right)}
	\,=\,
	-\,\frac{1}{\eps}\,\left\|D^{n+1}_0-\sqrt\rho_{\infty}\right\|_{l^2\left(\T\right)}^2\,.
	\] 
Then, $\cJ_2$ takes into account the contribution of the electric field. As it turns out, it is also a dissipation term: thanks to \eqref{discrete:lin} (third line) and \eqref{eq:elliptic:h}, we have
	$\ds
	\cA_h^\star\,\cA_h \,u_h^{n+1}\,=\,(\cA_h^\star\,\rho_{\infty}^{-1}\,\cA_h) \,\omega_h^{n+1}
	$, which yields
	\[
	\cJ_2\,=\, -\,\frac{1}{\eps}\left\langle
	 \omega^{n+1}_h,\,(\cA_h^\star\,\rho_{\infty}^{-1}\,\cA_h) \,\omega_h^{n+1}\right\rangle_{l^2\left(\T\right)}\,=\,
	 -\,\frac{1}{\eps}\left\|
	 \frac{\cA_h\,\omega^{n+1}_h}{\sqrt{\rho}_{\infty}}
	 \right\|^2_{l^2\left(\T\right)}\,.
	\]
Finally, $\cJ_3$ gathers all terms without good sign. Since $\cA_h$ and $\cA^\star_h$ are adjoint in $l^2(\T)$, it rewrites 
	\begin{align*}
	\cJ_3\,=\,
	&\frac{\sqrt{2}}{\eps}\,\left\langle
	D_2^{n+1}
	,\, (\cA_h)^2 u_h^{n+1}\right\rangle_{l^2\left(\T\right)}
	-\,\frac{1}{\tau_0\eps}\,\left\langle D^{n+1}_1
	,\, \cA_h\,u_h^{n+1}\right\rangle_{l^2\left(\T\right)}\\[0.8em]
	&+\,
	\left\langle D^{n+1}_1,\,\cA_h\left(\frac{u^{n+1}_h- u^{n}_h}{\Delta t}\right)\right\rangle_{l^2\left(\T\right)}
	+\,
	\left\langle D^{n}_1 - D^{n+1}_1,\,\cA_h\left(\frac{u^{n+1}_h- u^{n}_h}{\Delta t}\right)\right\rangle_{l^2\left(\T\right)}
	\,.
	\end{align*}
	We estimate the first two terms in the latter right-hand side
        using Young inequality and the first line in
        \eqref{useful:estimates:h}. The last two terms are estimated
        applying Cauchy-Schwarz inequality and the second line in
        \eqref{useful:estimates:h}. Hence, we get
	$$
	\cJ_3\,\leq\, \frac{1}{\eps}\,
	\frac{C}{\ols{\tau}_0}\,\eta\, \| D^{n+1}_0-D_{\infty,0}
	\|_{l^2\left(\T\right)}^2 \,+\, \frac{C}{\eta\,\eps}\, \left( \| D^{n+1}_2\|_{l^2\left(\T\right)}^2 \,+\,\frac{1}{\ols{\tau}_0}\, \| D^{n+1}_1\|_{l^2\left(\T\right)}^2\right)
	\,+\,
	\frac{C}{\Delta t}\,
	\| D^{n+1}- D^n\|_{l^2}^2
	\,.
	$$
	for any positive $\eta$, for some positive constant $C$ depending
	only on $\rho_i$ and $T_0$ and with $\ols{\tau}_0=\min(1,\tau_0)$.
Taking the sum between \eqref{time:der:additional:term} multiplied by $\beta_0$ and the estimate in Proposition \ref{energy:estimate} and replacing $\cJ_1$, $\cJ_2$ and $\cJ_3$ with the latter estimates, it yields
	\begin{align*}
		&\frac{\cH^{n+1}
			-\cH^{n}}{\Delta t}
		\,+\,
		\Delta t\,\left(1-C\,\beta_0\right)\cR_h^{n}\,\leq \\
		&-\frac{1}{\eps\,\tau_0}
		\left[
		\left(1-\frac{C}{\eta\,\ols{\tau}_0}\,\beta_0\,\tau_0\right)\sum_{k=1}^{N_H} \left\|D^{n+1}_k\right\|^2_{l^2\left(\T\right)}
		+
		\beta_0\tau_0\left(1-
		\frac{C}{\ols{\tau}_0}\eta\right) \| D^{n+1}_0-D_{\infty,0}
		\|_{l^2\left(\T\right)}^2
		+\beta_0\tau_0\left\|
		\frac{\cA_h\,\omega^{n+1}_h}{\sqrt{\rho}_{\infty}}
		\right\|^2_{l^2\left(\T\right)}
		\right].
	\end{align*}
	Choosing $\eta=\ols{\tau}_0/(2C)$ and
	$\beta_0\tau_0=2\ols{\tau}_0^2/(\ols{\tau}_0^2+4 C^2)$ it holds
	$$
	\left(1-\frac{C}{\eta\,\ols{\tau}_0}\,\beta_0\,\tau_0\right)\sum_{k=1}^{N_H} \left\|D^{n+1}_k\right\|^2_{l^2\left(\T\right)}
	+
	\beta_0\tau_0\left(1-
	\frac{C}{\ols{\tau}_0}\eta\right) \| D^{n+1}_0-D_{\infty,0}
	\|_{l^2\left(\T\right)}^2 \,=\, \frac{\ols{\tau}_0^2}{4\,C^2+\ols{\tau}_0^2}\, \| D^{n+1}-D_{\infty}\|_{l^2}^2, 
	$$
	hence the latter inequality becomes
	\[
	\frac{\cH^{n+1}
		-\cH^{n}}{\Delta t}
	\,+\,
	\Delta t\,\left(1-C\,\frac{2\ols{\tau}_0^2}{\tau_0(\ols{\tau}_0^2+4C^2)}\right)\cR_h^{n}\,\\
	\leq
	\,-\,\frac{1}{\eps\tau_0}\,
	\frac{2\ols{\tau}_0^2}{4\,C^2+\ols{\tau}_0^2}\,\cE^{n+1}
	\,.
	\]
	According to Lemma \ref{lem:2disc} and since $\beta_0\tau_0=2\ols{\tau}_0^2/(\ols{\tau}_0^2+4 C^2)$, we may replace $\cE^n$ with $\cH^n$ for $C\geq 0$ great enough in the latter estimate. Hence, after simple computations, we deduce the result
	\[
	\frac{\cH^{n+1}
		-\cH^{n}}{\Delta t}\;
	\leq\;
	\,-\,\frac{\kappa}{\eps}\,\min{\left(\tau_0,\tau_0^{-1}\right)}\cH^{n+1}
	\,,
	\]
	for some $\kappa>0$ depending only on $\rho_i$ and $T_0$ and $|b-a|$.
\end{proof}
We now conclude the proof of Theorem \ref{th:3.1}. First, from Proposition \ref{prop:3.4}, it is straightforward to obtain
\[
\cH^{n}
\,\leq\,
\left(
1\,+\,\frac{\kappa}{\eps}\,\min{\left(\tau_0,\tau_0^{-1}\right)}\,\Delta t
\right)^{-n}\cH^{0}\,.
\]
Then, we apply Lemma \ref{lem:2disc} on each side of the latter inequality and obtain the result.

\section{Numerical simulations}
\label{sec:4}
\setcounter{equation}{0}
\setcounter{figure}{0}
\setcounter{table}{0}

For numerical experiments,  we apply a slight modification of the
scheme \eqref{discrete:step1}-\eqref{discrete:step2} since  a Strang
splitting scheme with a second order implicit Runge-Kutta scheme is used to get second order accuracy in
time.

In our simulations, we fix the the temperature $T_0$ to $1$ and numerical parameters as follows: $N_x=129$, $\Delta t = 0.1$ and we adapt the number of Hermite modes
depending on the collisional regime. In Section \ref{sec:AP} we highlight the asymptotic preserving properties of our scheme and investigate the behavior of solutions when $\eps\ll 1$ and $\tau_0$ is fixed. Then, from Section \ref{sec:pert:density} to Section \ref{sec:two:stream}, we will fix $\eps=1$ in \eqref{discrete:step1}-\eqref{discrete:step2} and investigate the robustness of the scheme in different collisional regimes ranging from weakly collisional regime when $\tau_0\gg 1$
to strongly collisional plasmas when $\tau_0\simeq 1$. 

%In the numerical simulations presented in this section, we fix $T_0=1$
%and $\eps=1$ and take different values for the collisional frequency
%$\tau_0$ describing either weakly collisional regime when $\tau_0\gg 1$
%or strongly collisional plasmas when $\tau_0\simeq 1$. Then, we choose
%$N_x=129$ and $\Delta t = 0.1$ and adapt the number of Hermite modes
%depending on the collisional regime.

\subsection{Asymptotic-preserving properties}\label{sec:AP}

In this first test, we illustrate the robustness and the asymptotic preserving property of the scheme in the limit $\eps\rightarrow 0$. To do so, we keep $\Delta t = 0.1$ fixed and perform numerical
simulations with $\eps=10^{-k}$, for $k$ ranging from $0$ to
$6$. Since $\eps$ only appears in front of the time derivative, it can
be interpreted as a time scaling parameter. We emphasize that the extreme case $\eps=10^{-6}$ corresponds to taking
time step of the order {$\Delta t /\eps = 10^{5}$} : %we do not expect
                                %our scheme to reach  numerical
                                %convergence for such parameters but
we expect that the solution approaches  $f_\infty$ in one time step!
In this study, the numerical parameters are fixed and we  only consider a few number
of Hermite modes taking $N_H=80$. \\
We  choose the following spatially inhomogeneous equilibrium
$$
\rho_\infty(x) \,=\, c_\infty\, \exp\left({-\phi_\infty(x)}\right),\quad x \in (-L,L)\,, 
$$
where the potential $\phi_\infty$ is given by
$$
\phi_\infty(x) \,=\, 0.2\,\sin(k\,x)\,, \quad x \in (-L,L),
$$
with $k=\pi/L$ and $L=6$  whereas the constant $c_\infty$  ensures that
$$
\frac{1}{2L} \int_{-L}^L \rho_\infty(x) \,\dD x \,=\, 1.
$$
Thus, we take the initial distribution function as a perturbation of
this steady state, that is, 
$$
f(t=0,x,v) \,=\, \frac{1}{\sqrt{2\pi}}\,\left(\rho_\infty(x)\,+\,\delta\, \cos(k\,x)\right)\,\, \exp\left(-\frac{v^2}{2}\right)\,, \quad (x,v) \in (-L,L)\times \R,
$$
where $\delta=0.01$ and consider the case $\tau_0=10^5$, which corresponds to a
weakly collisional regime. 

\begin{figure}[htbp]
	\begin{tabular}{cc}
		\includegraphics[width=8.cm,angle=0]{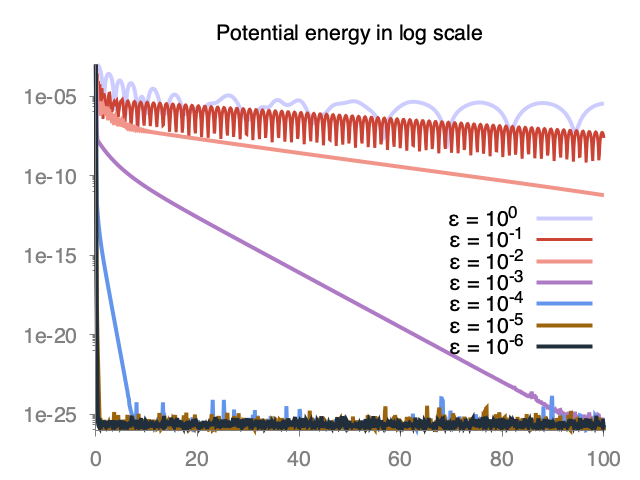} &
		\includegraphics[width=8.cm,angle=0]{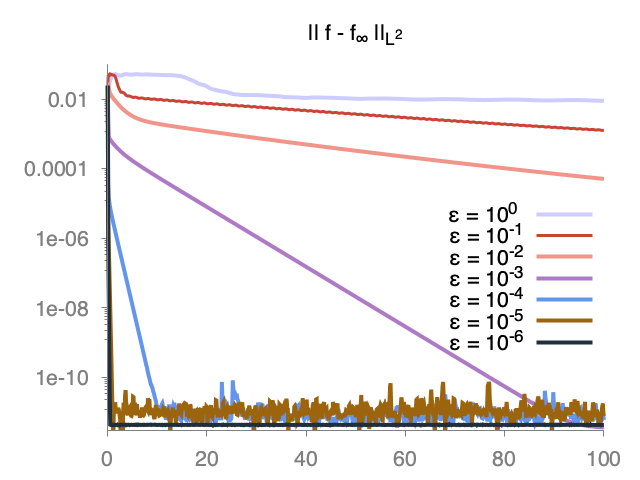} \\
		\\
		(a) & (b)
	\end{tabular}
	
	\caption{ {\it Asymptotic-preserving properties: time development of
			(a) the potential energy (b) $\| f -f_\infty
			\|_{L^2\left(f_\infty^{-1}\right)}$ (in log scale).}}
		\label{fig:-1}
\end{figure}

On Figure \ref{fig:-1}, we represent the time evolution of
the potential energy and  the quantity
\begin{equation}
	\label{def:L2}
	\cL_2(t)\, :=\,
	\|f(t)-f_\infty\|_{L^2\left(f_\infty^{-1}\right)}.
\end{equation}

As predicted by our analysis of the linearized model, we observe
exponential relaxation towards equilibrium at a rate which is
proportional to $1/\eps$. The scheme is uniformly stable 
with respect to $\eps$ and the solution converges to the discrete
equilibrium $f_\infty$ when $\eps\rightarrow 0$.

\begin{figure}[htbp]
\begin{tabular}{cc}
\includegraphics[width=8.cm,angle=0]{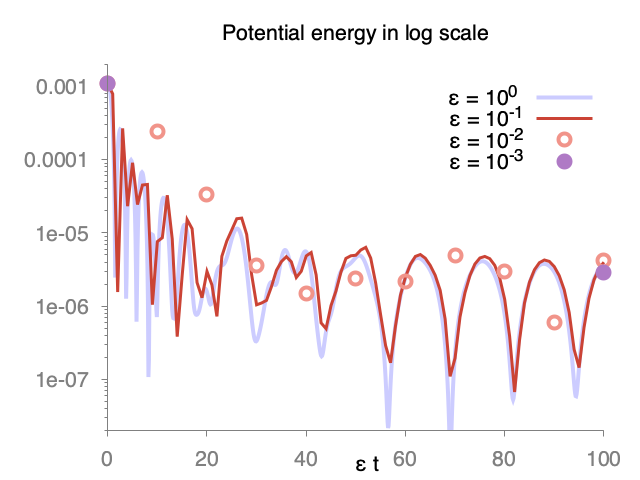}&
\includegraphics[width=8.cm,angle=0]{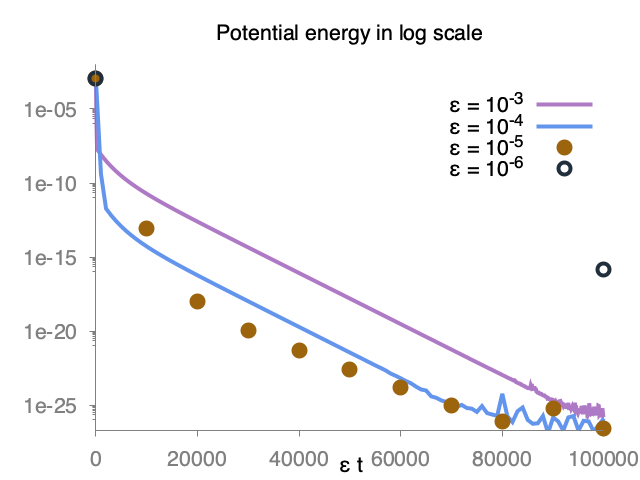}
\end{tabular}
\caption{ {\it Asymptotic-preserving properties: re-scaled time development $(s\leftarrow \eps\,t)$ of the potential energy (in log scale).}}
\label{fig:0}
\end{figure}

The left chart in Figure \ref{fig:0} represents the time evolution of the potential energy in log-scale for $t/\eps\in[0,100]$. We plot the approximations corresponding to $\eps=10^{-k}$, for $k$ ranging from $0$ to $3$ and, in each case, re-scale the time variable as $(s\leftarrow t/\eps)$ to compare solutions. We can see that the case $\eps=10^{-1}$ (which corresponds to taking a time step $\Delta t /\eps = 1$) fits very well with the approximation obtained when $\eps =1$. Both the first phase for $t/\eps\in[0,20]$ corresponding to fast oscillations and steep descent and the second phase for $t/\eps\in[20,100]$ corresponding to slower oscillations without damping are transcribed correctly in this case. When $\eps=10^{-2}$, we do not expect the solution to be precise during the first phase $t/\eps\in[0,20]$ since the corresponding time step $\Delta t /\eps = 10$, is greater than the time period of the oscillations. However, we see that in the second phase $t/\eps\in[20,100]$, the approximation catches up and even seems to capture the oscillatory behavior of the solution, despite the fact that time step and oscillation period have the same order of magnitude ($\simeq 10$). To conclude, the case $\eps=10^{-3}$ corresponds to taking a time step $\Delta t /\eps = 100$. Therefore, the approximation at time $t/\eps=100$ is surprisingly accurate considering that it was calculated in only a single iteration !

The right chart in Figure \ref{fig:0} represents the time evolution of
the potential energy in log-scale for $t/\eps\in[0,10^{5}]$. We plot
the approximations corresponding to $\eps=10^{-k}$, for $k$ ranging
from $3$ to $6$ and once again, in each case, re-scale the time
variable as $(s\leftarrow t/\eps)$ to compare solutions. We do not
observe oscillations anymore since collisions effects take over
transport phenomena at this time scale. Therefore, we compare the
exponential decay rate of our approximations in order to validate at
the nonlinear level our theoretical result, which holds for the
linearized scheme. We observe that in all cases $k=3,\dots,6$,
approximations present similar decay rate, even when $k=6$, which is
surprising considering the fact that in this case the approximation at
time $t/\eps=10^{5}$ was obtained in one iteration only. All these
results are  very satisfying! Indeed,  for the
differents regimes, corresponding to the values of $\eps$, our
numerical schemes is able to describe correctly the different phases:
an oscillatory behavior when $\eps\geq 10^{-3}$ and an exponential decay to
equilibrium when $\eps\leq 10^{-3}$.

To conclude this section, it is worth to mention that in this test only, we tuned the expert options of the Super
LU library \cite{superlu} used as a direct solver in our code. More
precisely, for small values of $\eps$, the system associated to
\eqref{discrete:step1} may be ill conditioned, hence we disabled the equilibration option and tuned the threshold used for a diagonal entry to be an acceptable pivot in the factorization.

\subsection{Perturbation of non uniform density}\label{sec:pert:density}
For this second numerical test, we  consider the same initial
condition as in the preceding Section \ref{sec:AP}. However, we now fix $\eps$ to $1$ and perform simulations with variable $\tau_0$. To enforce numerical convergence, we have chosen a large number of Hermite modes $N_H =400$ when the plasma is weakly
collisional, that is when $\tau_0\gg 10^{2}$, since filamentation may occur in phase
space whereas $N_H=50$ is enough when collisions dominate.

On the one hand, we take $\tau_0=10^4$ corresponding to the weakly
collisional regime and compare two solutions, one is obtained using
\eqref{discrete:lin} corresponding to the linearized
Vlasov-Poisson-Fokker-Planck system \eqref{vpfp:lin0} and the second one
is given by \eqref{discrete:step1}-\eqref{discrete:step2}
corresponding to the nonlinear Vlasov-Poisson-Fokker-Planck system
\eqref{vpfp:0}. Our results show that  both solutions have the same
behavior, which means that, for such a small perturbation, the linear
regime governs the dynamics. To illustrate this observation, we report in
Figure \ref{fig:2.1} the time evolution of the potential energy
$$
\cE_p(t) := \int_{\T} \left|\partial_x \psi(t,x)\right|^2\,\dD x
$$
obtained using \eqref{discrete:lin} in $(a)$
and \eqref{discrete:step1}-\eqref{discrete:step2} in $(b)$. Both
solutions first produce fast damped oscillations up to time $t\leq
20$ and then oscillate with a lower frequency while converging
exponentially fast to zero with the same convergence rate $\gamma_L\simeq
0.004$.

\begin{figure}[htbp]
\begin{tabular}{cc}
\includegraphics[width=8.cm,angle=0]{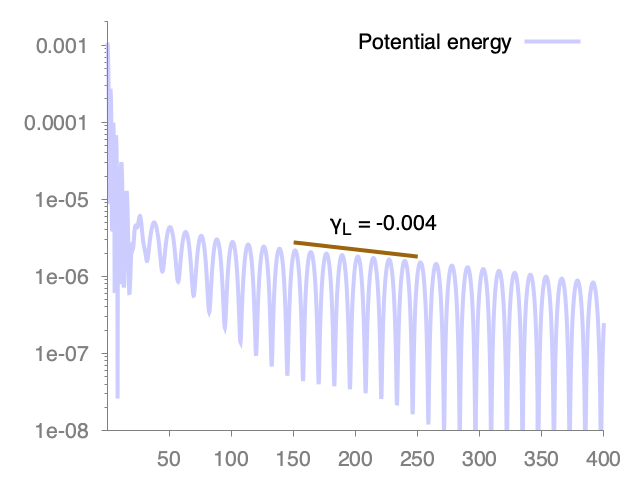} &
\includegraphics[width=8.cm,angle=0]{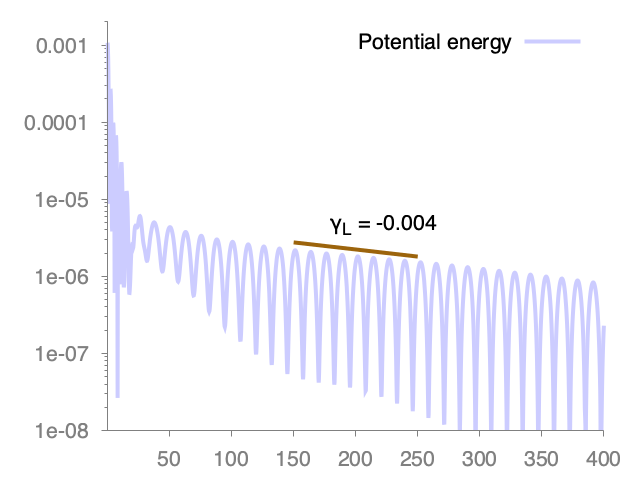} \\
\\
(a) & (b)
\end{tabular}
\caption{ {\it Perturbation of non uniform density for $\tau_0=10^{4}$ (weakly collisional regime): time development of the potential energy in log scale (for (a) the linearized Vlasov-Poisson-Fokker-Planck system and (b) the nonlinear Vlasov-Poisson-Fokker-Planck system.}}
\label{fig:2.1}
\end{figure}

 In Figure \ref{fig:2.2}, we show several snapshots of the
difference between the distribution function $f$ and its equilibrium
$f_\infty$ for $t\in[4,70]$. As expected, thin filaments propagate in
phase space for large velocities but surprisingly we also observe that a vortex is generated in the region where
$|v|\leq 1$. For large time, this vortex remains and continues to
rotate around the point $(x_C,v_C)=(-3,0)$. For such a regime, where collisions are almost negligible, the
amplitude of the perturbation does not vanish even when $t\simeq 70$ and transport phenomena dominate.

\begin{figure}[htbp]
\begin{tabular}{cc}
\includegraphics[width=8.cm,angle=0]{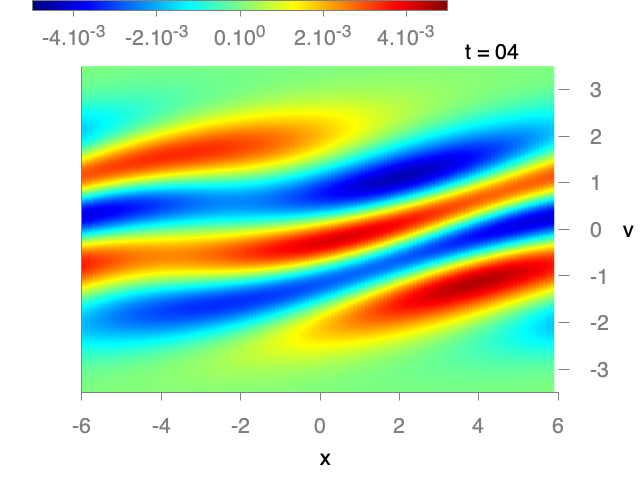}&
\includegraphics[width=8.cm,angle=0]{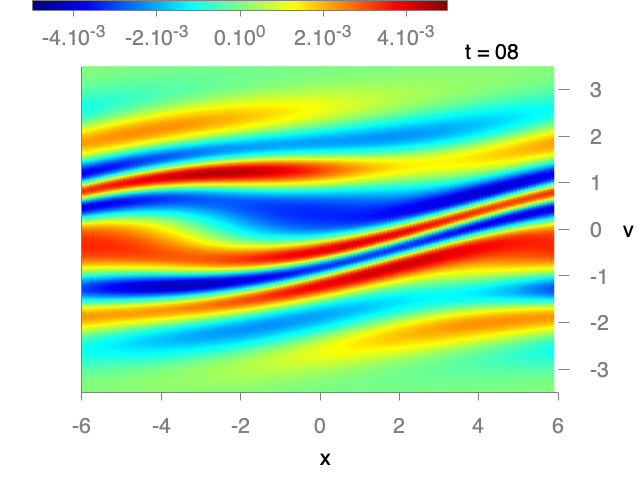}
 \\
 \includegraphics[width=8.cm,angle=0]{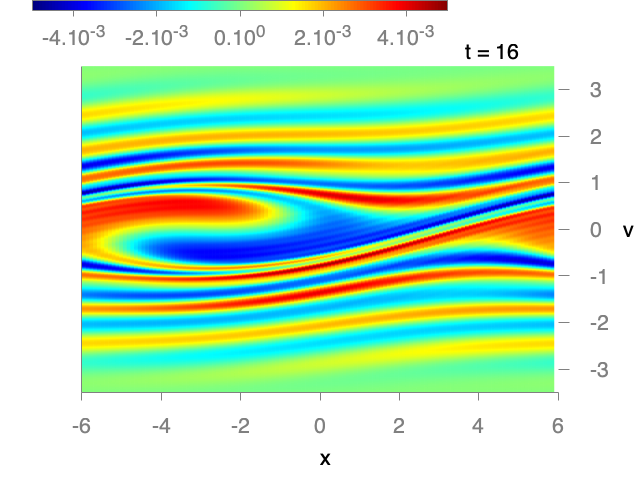}&
\includegraphics[width=8.cm,angle=0]{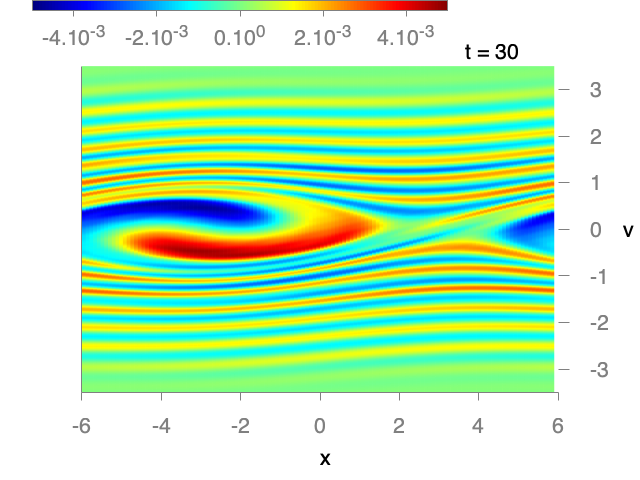}
 \\
 \includegraphics[width=8.cm,angle=0]{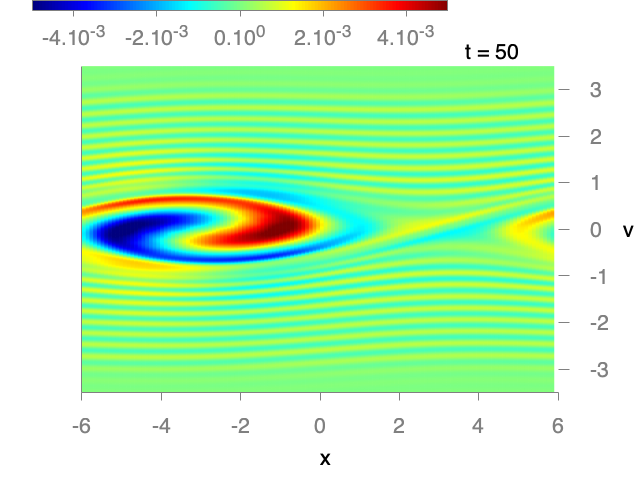}&
\includegraphics[width=8.cm,angle=0]{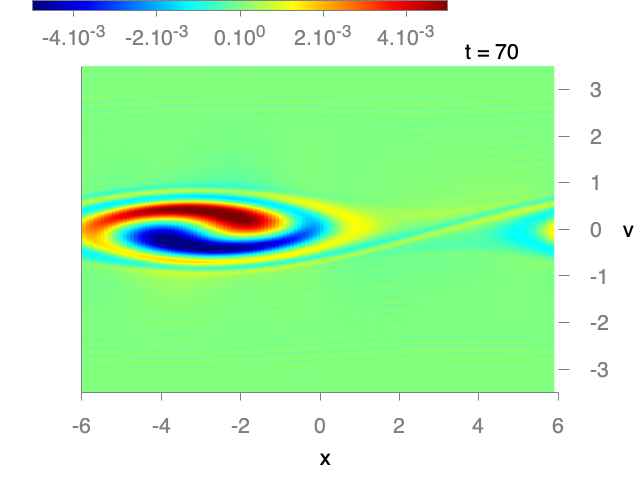}
\end{tabular}
\caption{ {\it Perturbation of non uniform density (weakly collisional regime, $\tau_0=10^{4}$): snapshots of the difference between the solution $f$ and the equilibrium $f_\infty$ at time $t=4$, $8$, $16$, $30$, $40$ and $70$.}}
\label{fig:2.2}
\end{figure}

On the other hand, we study the influence of the collision frequency $\tau_0$ and
perform several numerical simulations for the nonlinear
Vlasov-Poisson-Fokker-Planck system \eqref{vpfp:0} with the same initial data for $\tau_0=10^k$, with $k=0,\ldots,4$ (see Figure \ref{fig:2.3}). For a weakly collision regime, that is $k \geq 3$, we again observe oscillations of the potential
energy and an exponential decay. However, when collisions dominate, fast
oscillations only occur for short time, then the potential energy
decays rapidly to zero without any oscillations. This trend to
equilibrium can be also viewed on the distribution function as shown on Figure
\ref{fig:2.3}-(b), where we present the time evolution of the quantity
$\cL_2$ defined in \eqref{def:L2}.

\begin{figure}[htbp]
\begin{tabular}{cc}
\includegraphics[width=8.cm,angle=0]{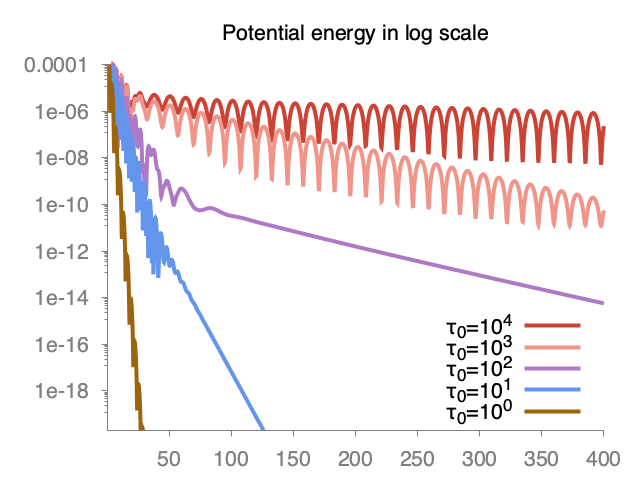}&
\includegraphics[width=8.cm,angle=0]{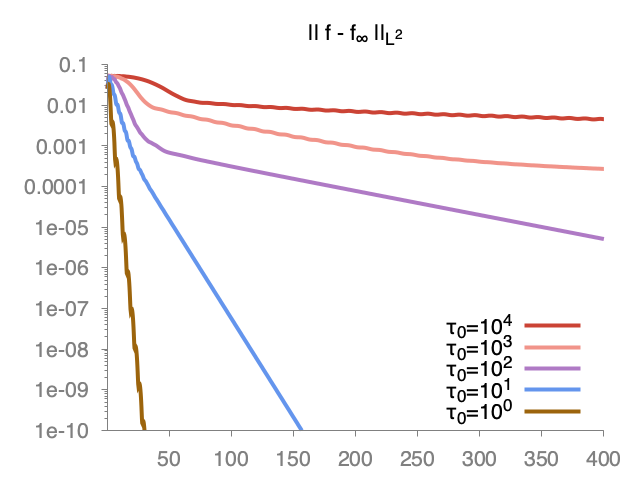}
\end{tabular}
\caption{ {\it Perturbation of non uniform density: time development of (a) the potential energy (b) $\| f -f_\infty \|_{L^2\left(f_\infty^{-1}\right)}$ for various $\tau_0=1,\ldots, 10^4$ (in log scale).}}
\label{fig:2.3}
\end{figure}

Finally in Figure \ref{fig:2.4}, we again present snapshots of
$f-f_\infty$ at different time when $\tau_0=10^2$. In this situation,
collisions dissipate thin filaments generated by the transport term
and the amplitude of the perturbation vanishes. As a consequence, we
do not detect anymore the vortex structure on the difference
$f-f_\infty$ and the solution converges exponentially fast to the
equilibrium  as it has been shown in Theorem \ref{th:3.1} for the
linearized system.

\begin{figure}[htbp]
\begin{tabular}{cc}
\includegraphics[width=8.cm,angle=0]{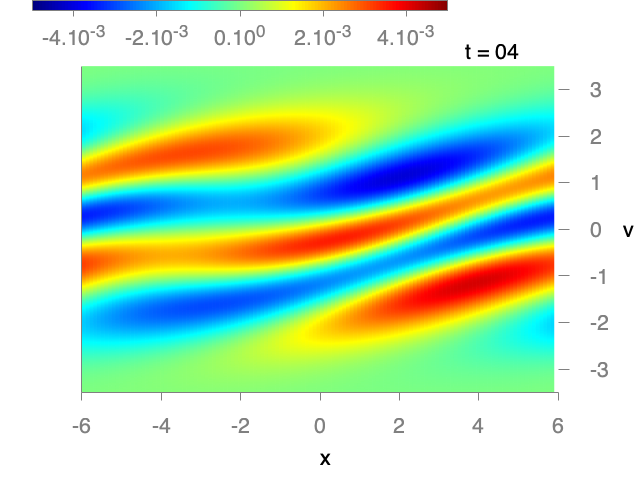}&
\includegraphics[width=8.cm,angle=0]{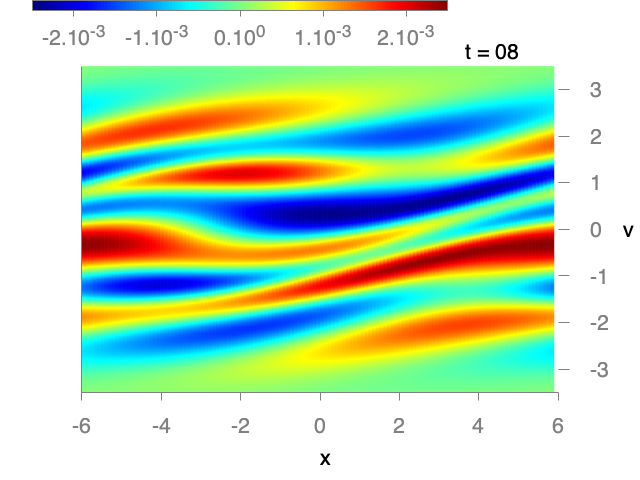}
  \\
\includegraphics[width=8.cm,angle=0]{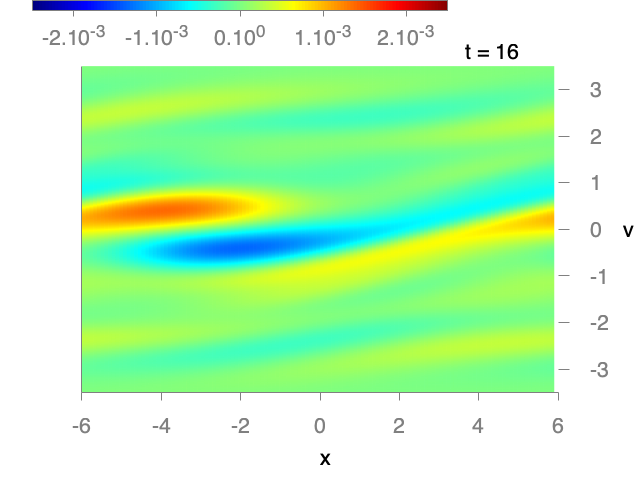}&
\includegraphics[width=8.cm,angle=0]{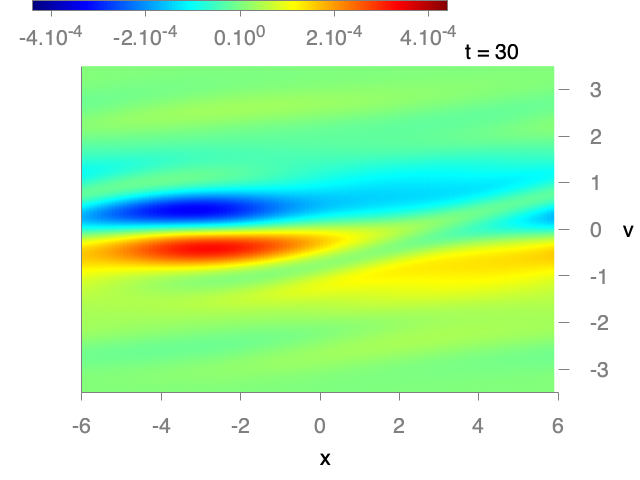}
  \\
 \includegraphics[width=8.cm,angle=0]{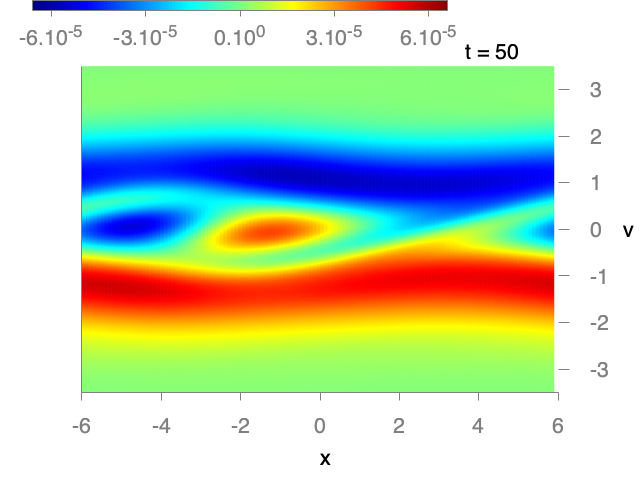} &
\includegraphics[width=8.cm,angle=0]{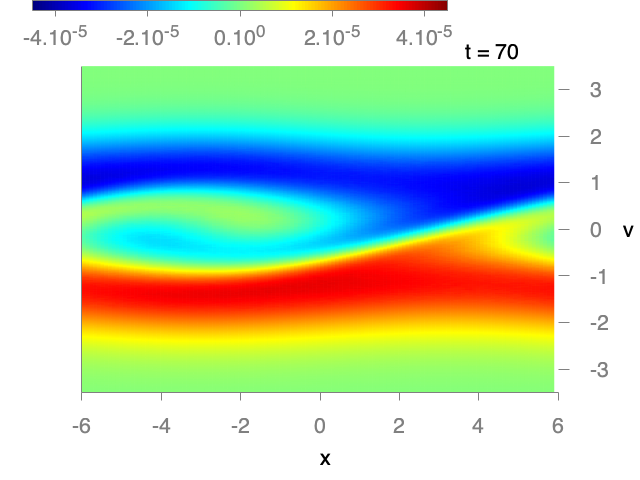}
\end{tabular}
\caption{ {\it Perturbation of non uniform density (moderate collisional regime, $\tau_0=10^{2}$): snapshots of the difference between the solution $f$ and the equilibrium $f_\infty$ at time $t=4$, $8$, $16$, $30$, $40$ and $70$.}}
\label{fig:2.4}
\end{figure}

\subsection{Plasma echo}

We now investigate a much more intricate problem where the non-linearity plays
the main role. Following the work \cite{echo1,echo2}  or more
recently \cite{Bedrossian17,Grenier_Toan_Rodnianski22}, we will consider a
perturbation of an homogeneous Maxwellian distribution $f_{\infty}(x,v):=\cM(v)$ where 
$$
\cM(v) =
\frac{1}{\sqrt{2\pi}}\exp\left(-\frac{v^2}{2}\right), \quad v \in\R\,,
$$
on the space
interval $[-L,\,L]$, with $L=6$.
Of course, this homogeneous Maxwellian is stable: for a small
perturbation in high order Sobolev norms, we expect to observe the well
known Landau damping on the macroscopic quantities. However, our aim
is to investigate a transient regime where a plasma echo occurs for a
well chosen perturbation. This echo appears when two waves with
distinct frequencies interact. For a large time period, the
effect is very small but at certain particular times, the
interaction becomes strong: this is known in plasma physics as the plasma echo,
and can be thought of as a kind of resonance \cite{echo1,echo2}.

In all this section, we fix $\eps$ to $1$. To build our initial
condition, we proceed in two steps \cite{filbetCPC}. We first solve numerically the Vlasov-Poisson system with almost no collisions ($\tau_0=10^6$) on the time interval $[-30, 0]$ with an initial data at time $t_0=-30$ given by
$$
\tilde f_{in}(x,v) \,=\, \left(1+\delta \, \cos(k_1 x)\right)\, \cM(v)\,,
$$ 
where $\delta =0.01$ and $k_1=\pi/L$. This choice induces an
exponential decay of the potential
energy by Landau damping at the rate associated to the perturbed mode $k_1$, hence
it gives a distribution function  which is almost space homogeneous
with small and fast oscillations in  velocity. Then, at $t=0$, we consider
this solution, denoted by $\tilde f(0)$, and choose it as initial data
with  a  perturbation of the mode $k_2:=2k_1$. More precisely, we take
$$
f_0(x,v) \,=\,  \tilde f(0,x,v) \,+\,\delta \, \cos(k_2 x)\, \cM(v) \,.
$$
This initial data is represented in Figure \ref{fig:1.3}. Then, starting from $f_0$, we solve the
Vlasov-Poisson-Fokker-Planck system on the time interval $[0,120]$.

On the one hand, we take $\tau_0=10^{6}$, which corresponds to a
weakly collisional regime generating an oscillatory solution in
velocity. For this reason, we choose a large number of Hermite modes $N_H=8000$ in such a way that our
numerical results are not anymore  sensitive to the numerical parameters. Let
us emphasize that we compare our numerical results with those
obtained using a semi-Lagrangian method
\cite{filbetCPC,sonnen,Filbet2001}, which also requires such a large
number of points in order to reach convergence. The phenomena is so intricate that we want to be sure that numerical parameters do not
produce any artefact...

Now let us comment our numerical results. In this weakly collisional regime, we expect that this initial data
will induce a first Landau damping phenomena due to the perturbation
of the second mode $k_2$. However, after a time much longer than the
inverse Landau damping rate, a new wave, called "echo" will appear as
a modulation of the density at the wave number $k_{echo} = k_2-k_1=
k_1$. This echo is due to the interaction between modes and is
essentially a phenomenon of beating between two waves. First, we will
see that the nonlinearity is crucial here. Indeed, in Figure
\ref{fig:1.1}, we show the time evolution of the potential energy and the first modes of
the electric field  obtained by applying the scheme \eqref{discrete:lin} corresponding to the linearized Vlasov-Poisson-Fokker-Planck
system and the scheme \eqref{discrete:step1}-\eqref{discrete:step2}
corresponding to the nonlinear system.  The numerical solution corresponding to the linearized system exhibits
a simple Landau damping, when $t\geq 5$, with a decay rate corresponding to the 
predicted value $\gamma_L = 0.355$, whereas the numerical solution
corresponding to the nonlinear system differs completely. It exhibits a
first fast decay  as for the linearized system, but when $t\geq 15$,
it produces an echo on the potential energy and the first mode (see
right-hand side of Figure \ref{fig:1.1}). The echo reaches its maximal amplitude at
$t=30$ for which we report the snapshots of the $f-f_\infty$ in Figure
\ref{fig:1.3}.  The first damping of the
perturbed mode $k_2$ for short time $t\leq 5$ and the subsequent echo are
accurately reproduced. From \cite{filbetCPC},  the echo wave number
is indeed expected to be $k_{echo} = k_1$ the first echo time
is predicted at 
$$
t_{echo}  \,=\, t_0+\frac{k_2}{k_2-k_1}(0-t_0)\,=\,30\,,
$$ 
which corresponds very well with the numerical value we obtain here. From time $t=0$
to $t\simeq 20$, the second wave corresponding to the mode $k_2$ has no
effect on the first mode $k_1$ of the electric field, but at time
$t=30$, it is strongly perturbed by the echo effect. Actually, our
numerical results illustrate the complexity of these phenomena.

On the one hand,   we notice that \textbf{echoes are repeated  through time}. On the
potential energy (see the top right chart of Figure \ref{fig:1.1}), a Landau
damping  is observed when $30\leq t\leq 80$, then we again discern a new echo around time
$t=90$ followed by a new damping. We also remark that this second damping ($t\geq 90$) unfolds with a smaller decay rate than the first one ($30\leq t\leq 80$). This repetition
of echoes can be also perceived on the modes of the electric field (see the bottom right chart of Figure \ref{fig:1.1}).

On the
other hand,  we report the time evolution of the first modes of the
electric field (see the bottom right chart of Figure \ref{fig:1.1}) and notice that \textbf{other modes are also subjected to echoes} but at times which differ from the "macroscopic" echo time $t_{echo}=30$. These echoes are not visible on the potential energy since their amplitude is smaller than the one of the potential energy by several order of magnitude. However, we point out that the third mode is subjected to a dramatic echo whose maximal amplitude, reached at time $t=15$, is greater than the initial amplitude by a factor almost $10^5$. A careful inspection allows to distinguish the effect of this echo on the overall amplitude of the potential energy (compare bottom and top right charts of Figure \ref{fig:1.1}). It
is worth to mention that all modes corresponding to the linearized
system are subjected to the classical Landau damping without any
echo. This is a nice example where nonlinear effects, even small, induce intricate oscillatory behaviors.

\begin{figure}[htbp]
   \begin{tabular}{cc}
    \includegraphics[width=8.cm,angle=0]{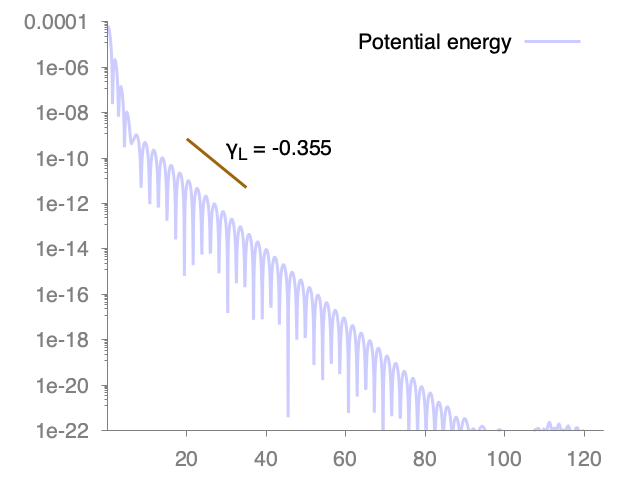} &
    \includegraphics[width=8.cm,angle=0]{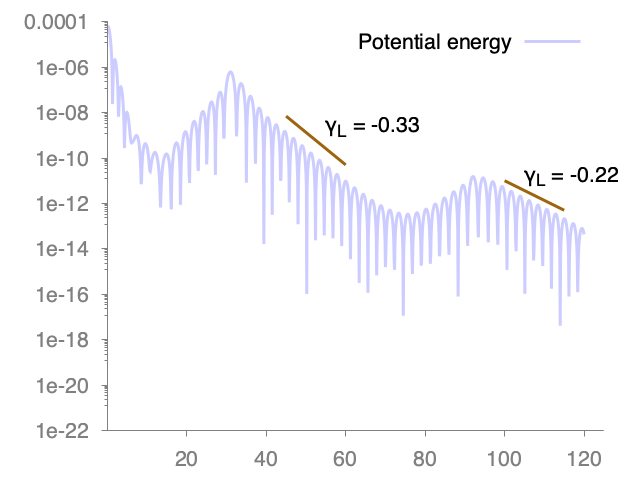} \\
  \includegraphics[width=8.cm,angle=0]{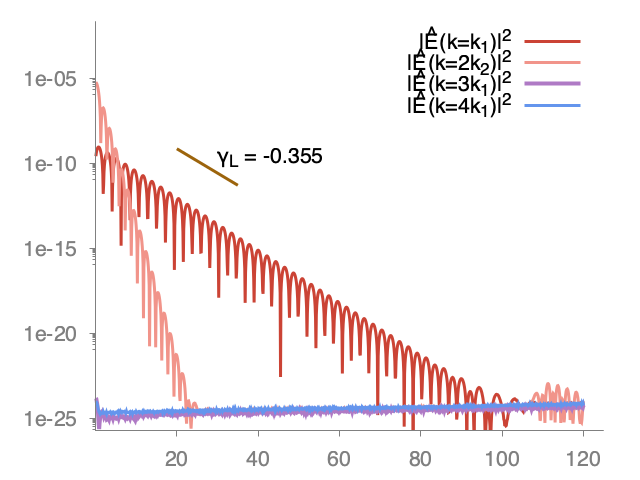} &
    \includegraphics[width=8.cm,angle=0]{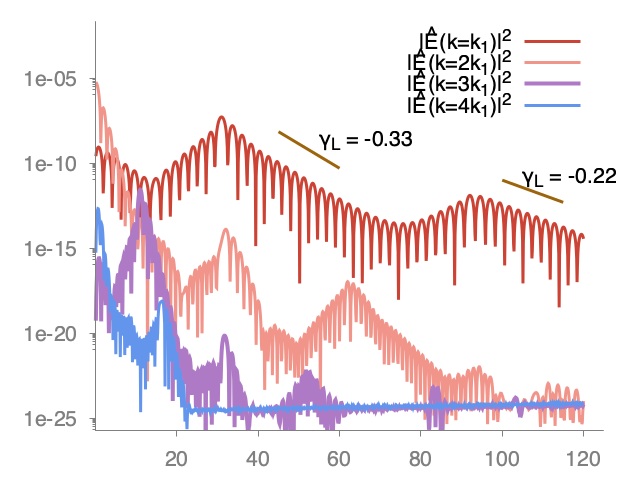}
    \\
    (a) & (b)
\end{tabular}
\caption{ {\it Plasma echo for $\tau_0=10^{6}$ (weakly collisional regime): time development of the potential energy (top) and square of the $k$-th mode of the electric field for $k=k_1,...,4k_1$ in log scale (bottom) for (a) the linearized Vlasov-Poisson-Fokker-Planck system and (b) the nonlinear Vlasov-Poisson-Fokker-Planck system.}}
\label{fig:1.1}
\end{figure}

For this weakly collisional regime ($\tau_0=10^{6}$), we also report
(on Figure \ref{fig:1.2}) the time evolution of the quantity
$$
\cL_2(t)\, :=\, \|f-f_\infty\|_{L^2\left(f_\infty^{-1}\right)}.
$$
First, we point out that unlike for potential energies, we do not observe any difference between the behavior of the linearized \eqref{discrete:lin} and the nonlinear \eqref{discrete:step1}-\eqref{discrete:step2} solutions at the level of distribution functions on these charts. Second, we notice that on this time interval, collisions are negligible and we clearly see that the distribution function $f$ does not yet converge to $f_\infty$. Figure \ref{fig:1.2} also shows that unlike in the case of strong Landau damping, variations of the spatial distribution occurs at an amplitude which is way smaller than the error between kinetic distributions. 
\begin{figure}[htbp]
   \begin{tabular}{cc}
    \includegraphics[width=8.cm,angle=0]{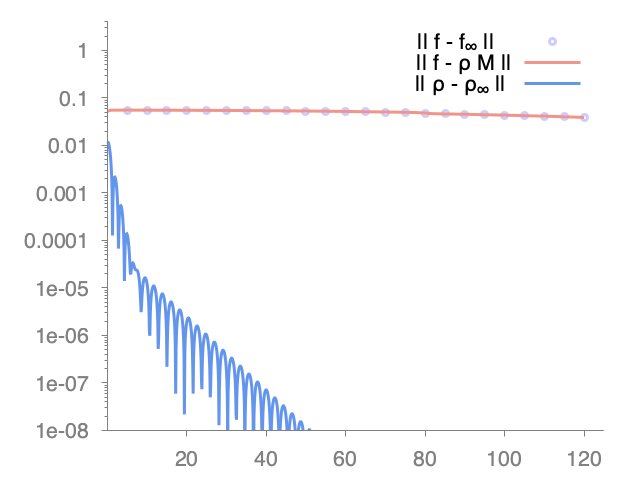} &
    \includegraphics[width=8.cm,angle=0]{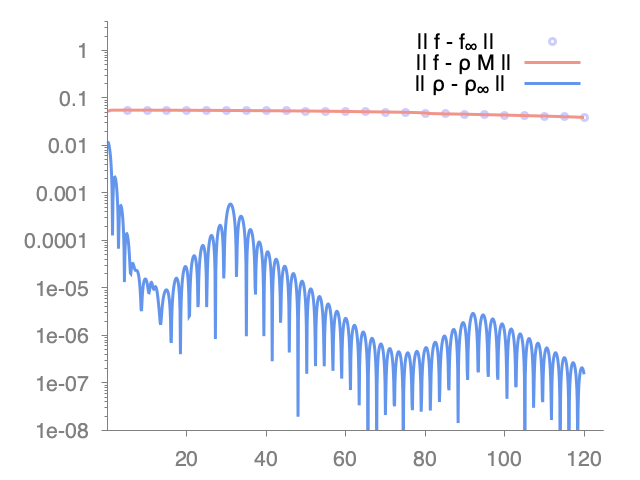} \\
     (a) & (b)
\end{tabular}
\caption{ {\it Plasma echo for $\tau_0=10^{6}$ (weakly collisional regime): time development of $\ds\|f-f_\infty\|_{L^2(f_\infty^{-1})}\,$,\\ $\ds\|f-\rho\,\cM\|_{L^2(f_\infty^{-1})}$ and $\ds\|\rho-\rho_\infty\|_{L^2(\rho_\infty^{-1})}$ in log scale for (a) the linearized Vlasov-Poisson-Fokker-Planck system and (b) the nonlinear Vlasov-Poisson-Fokker-Planck system.}}
\label{fig:1.2}
\end{figure}

This can be also viewed in Figure \ref{fig:1.3}, where we report the snapshots of the difference
between the distribution function $f$ solution to the nonlinear system \eqref{discrete:step1}-\eqref{discrete:step2} and the equilibrium $f_\infty$. We first
observe the projection of the initial data which exhibits
oscillations in velocity and a smooth perturbation in $x$ with a small 
amplitude of order $10^{-3}$. At time $t=30$ when the echo occurs, we
clearly see that the first mode $k_1=1$ dominates, then the solution
continues to oscillate due to the transport operator in a periodic
domain in space.

\begin{figure}[htbp]
\begin{tabular}{cc}
\includegraphics[width=8.cm,angle=0]{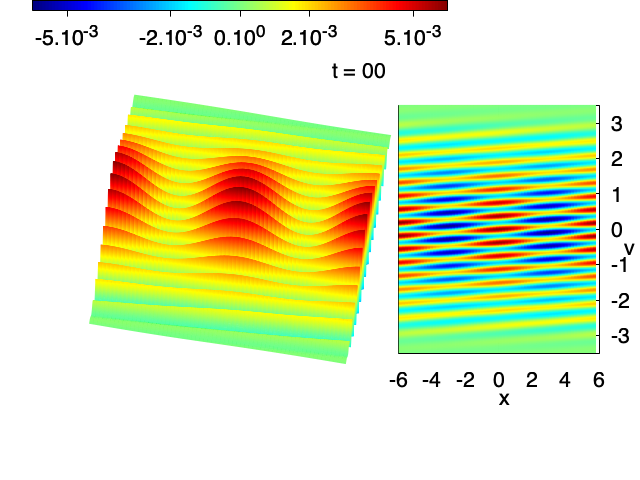}&
\includegraphics[width=8.cm,angle=0]{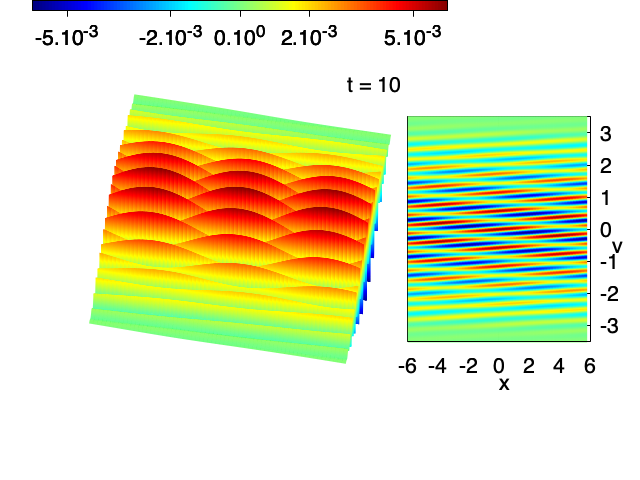}
\\
\includegraphics[width=8.cm,angle=0]{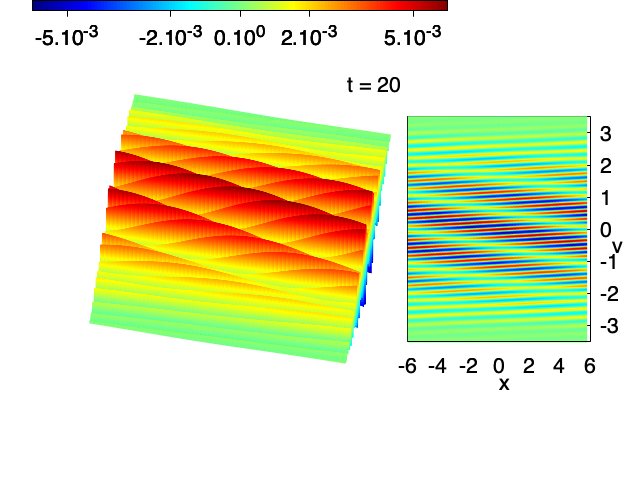}&
\includegraphics[width=8.cm,angle=0]{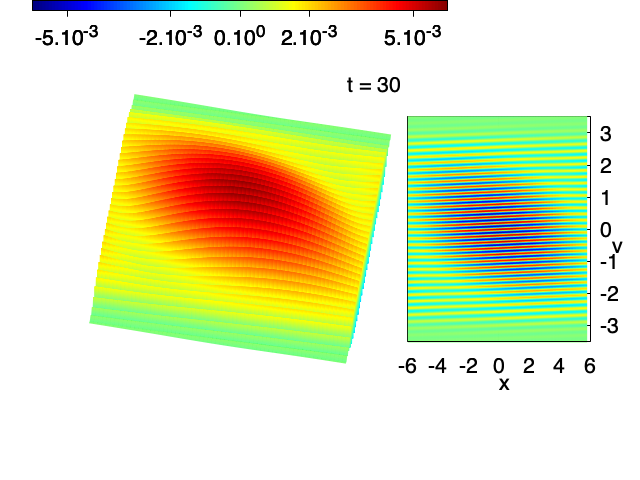}
\\
\includegraphics[width=8.cm,angle=0]{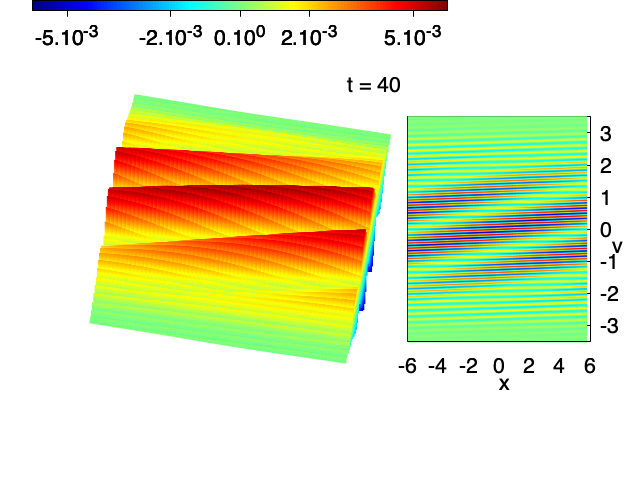}&
\includegraphics[width=8.cm,angle=0]{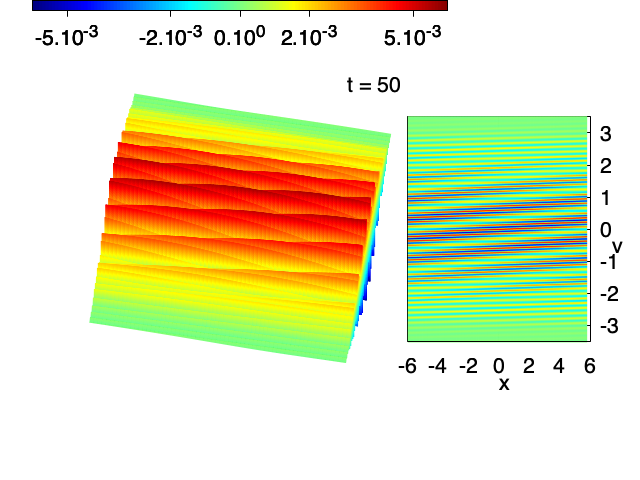}
\end{tabular}
\caption{ {\it Plasma echo for $\tau_0=10^{6}$ (weakly collisional regime): snapshots of the difference between the solution $f$ and the equilibrium $f_\infty$ at time $t=0$, $20$, $30$, $40$ and $50$.}}
\label{fig:1.3}
\end{figure}

A natural question in physics is "how to cancel plasma
echo?" for which a natural answer is that collisions may play a role, as shown in several recent articles \cite{Bedrossian17,Chaturvedi_Luk_Nguyen23}. To illustrate this phenomenon, we perform new numerical simulations passing from weakly to strongly
collisional regime and again compare the two solutions corresponding
to the 
the linearized system \eqref{discrete:lin} and the nonlinear one
\eqref{discrete:step1}-\eqref{discrete:step2}. The results are now reported in Figure \ref{fig:1.4}. Roughly speaking, when $\tau_0> 10^2$, the nonlinear system exhibits a plasma echo whereas when collisions dominate, the electric field is rapidly damped and the
solution converges to its equilibrium $f_\infty$ exponentially fast as predicted by our analysis. It is worth to mention again that at the level of the distribution function, the $\cL_2$ time evolution of linearized \eqref{discrete:lin} and nonlinear \eqref{discrete:step1}-\eqref{discrete:step2} solutions are globally the same for various regimes independently of the plasma echo.
\begin{figure}[htbp]
\begin{tabular}{cc}
 \includegraphics[width=8.cm,angle=0]{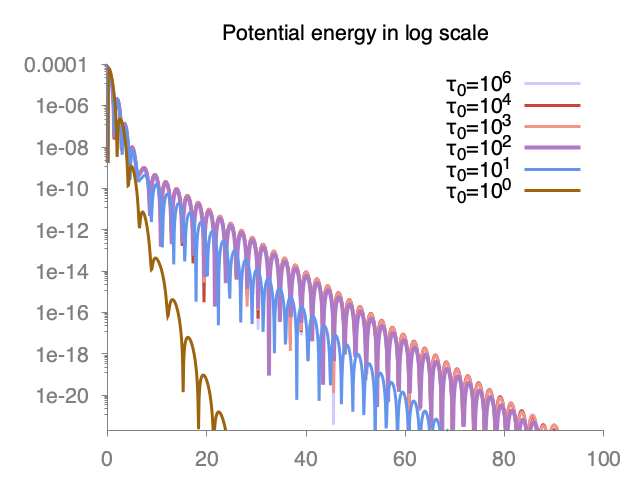} &
 \includegraphics[width=8.cm,angle=0]{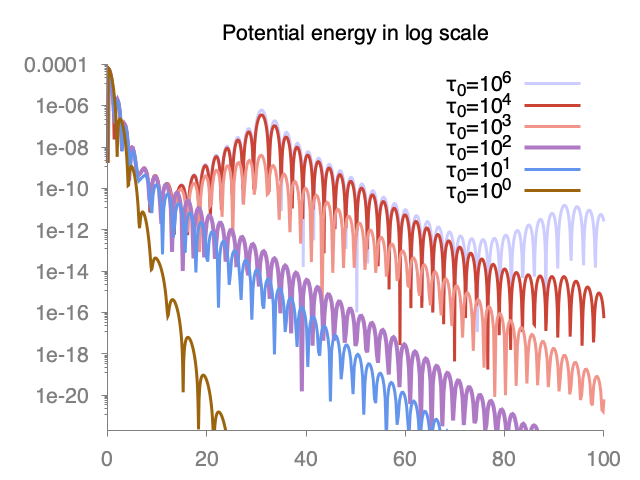}
 \\
 \includegraphics[width=8.cm,angle=0]{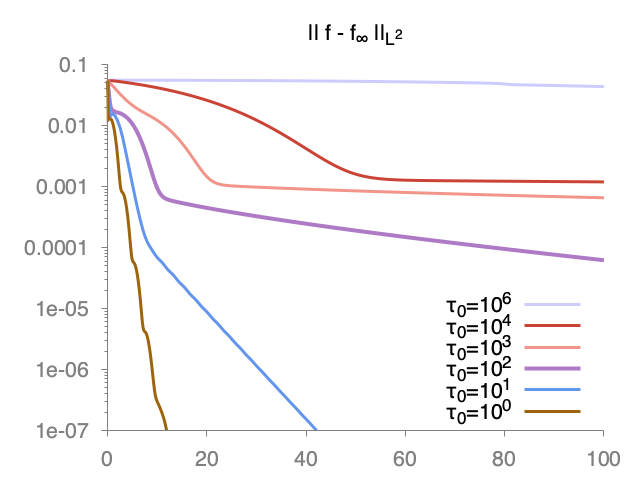} &
 \includegraphics[width=8.cm,angle=0]{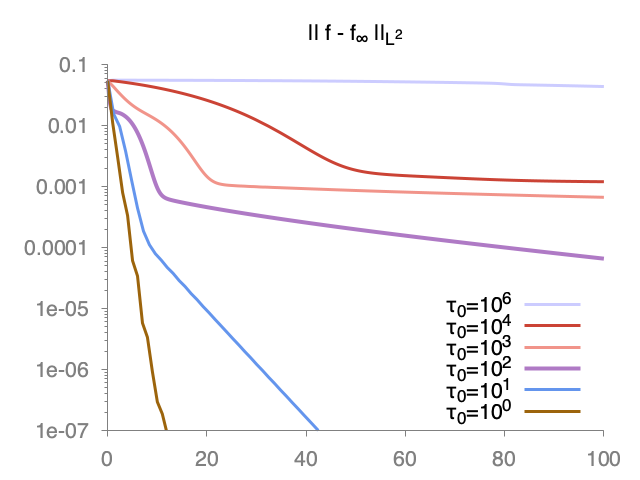}
  \\
  (a) & (b)
\end{tabular}
\caption{ {\it Plasma echo for $\tau_0=1,\ldots, 10^6$ (various regimes): time development of the potential energy (top) and $\| f-f_\infty \|_{L^2(f_\infty^{-1})}$ (bottom) in log scale for (a) the linearized Vlasov-Poisson-Fokker-Planck system and (b) the nonlinear Vlasov-Poisson-Fokker-Planck system.}}
\label{fig:1.4}
\end{figure}

\subsection{Two-stream}\label{sec:two:stream}

In this last experiment, we fix $\eps$ to $1$ and consider the equilibrium
$$
\phi_\infty(x) = 0.1\,\left( 1-\cos(k\,x)\right), \quad x \in (-L,L),
$$
with $k=\pi/L$ with $L=6$. The equilibrium is therefore given by 
\[
f_{\infty}(x,v)\,=\,c_0\,\exp\left(-\,\phi_{\infty}(x)\,-\,\frac{|v|^2}{2}\right)\,,
\]
where $c_0$ is renormalizing constant. Furthermore, we consider the initial distribution function as 
$$
f(t=0,x,v) \,=\, \f{1}{6\,\sqrt{2\pi}}\,\left(1+\delta\cos(k\,x)\right)\, (1+5v^2)
\, \exp\left(-\frac{v^2}{2}\right)\,, \quad (x,v) \in (-L,L)\times \R,
$$
where $\delta=10^{-2}$. These conditions can be viewed as a perturbation
of data leading to the well-known two-stream instability when
$\phi_\infty\equiv 0$. For this case, we fix the number of Hermite modes at $N_H=800$ and consider the solution $f$ to the nonlinear scheme \eqref{discrete:step1}-\eqref{discrete:step2}. The purpose of this experiment is to compare our results with the classical two-stream instability which is usually performed with an homogeneous background distribution and to study the influence of collisions on our results.\\
Our first comment is that unlike in the classical two-stream
instability \cite{ref:5}, it is not clear that the electric field
develops an exponential growth in this case. This may be observed on the left plot of Figure \ref{fig:3.1} considering the curves of the quantity $\| E -E_\infty \|_{L^2(\T)}$ in weak and intermediate collisional regimes $\tau_0=10^k$, with $2\leq k\leq6$, and also on the bottom charts of Figure \ref{fig:3.2} considering the blue curves which represent the time development of $\|\rho-\rho_{\infty}\|_{L^2(\rho_\infty^{-1})}$ when $\tau_0=10^k$, with $k=2\,,3$. \\
However, similarly to classical two-stream instabilities, we remark that in weakly collisional regimes ($\tau_0=10^k$, with $3\leq k\leq6$), the electric field is not damped over time since collisions are negligible on the timescale of our experiments (see Figure \ref{fig:3.1}). When collisions are extremely weak $k=4\,,6$, we even distinguish oscillations of the electric field (see Figure \ref{fig:3.1}). \\
This similarity with the classical two-stream instability may also be noticed at the level of kinetic distributions, as we may see on the left-hand side of Figure \ref{fig:3.3} where are represented snapshots of
$f$ at different times when $\tau_0=10^4$. Indeed, we witness the formation of a vortex persisting over time. \\
When collisions are intense enough, that is $\tau_0=10^k$, with $k\leq2$, we perceive exponential relaxation towards equilibrium at the level of the electric field (see left chart of Figure \ref{fig:3.1}), kinetic distribution (see right chart of Figure \ref{fig:3.1}), spatial density and higher Hermite modes (see Figure \ref{fig:3.2}). This relaxation may also be observed on Figure \ref{fig:3.3}, columns $(b)$ and $(c)$, where the vortex structure is affected by collisions and even canceled completely when $\tau_0=10^2$.\\
Our last comment on this experiment concerns the strongly collisional regime $\tau_0=10^k$ when $k=0\,,\,1$. A somehow surprising phenomena unfolds since new oscillations appear on all the quantities of interest: electric field (see left chart of Figure \ref{fig:3.1}), kinetic distribution (see right chart of Figure \ref{fig:3.1}), spatial density and higher Hermite modes (see Figure \ref{fig:3.2}, plots $(a)$ and $(b)$). We have already discerned oscillations in a similar setting \cite[Section $4.1$]{BF_09_22} where a non-constant stationary force field was applied in strongly collisional settings. However we deal here with a self induced force field whereas \cite{BF_09_22} focuses on the linear case. These oscillations seem robust enough to persist in the present situation.

\begin{figure}[htbp]
\begin{tabular}{cc}
\includegraphics[width=8.cm,angle=0]{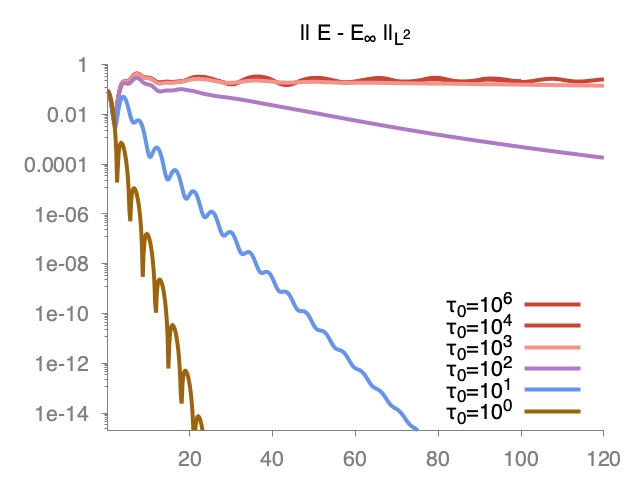}&
\includegraphics[width=8.cm,angle=0]{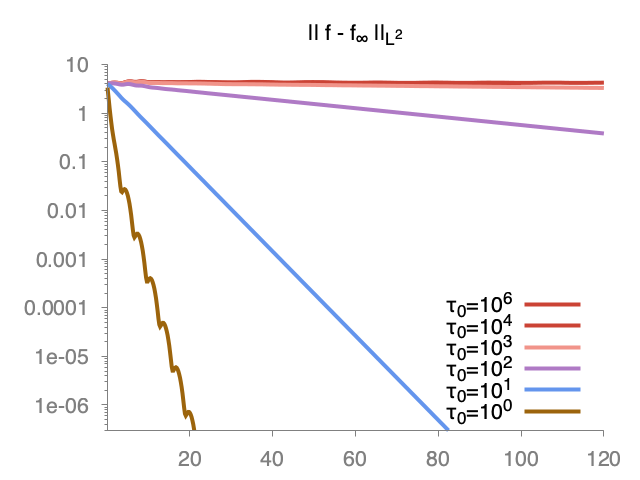}
\end{tabular}
\caption{ {\it Two-stream : time development of (a) $\| E -E_\infty \|_{L^2(\T)}$ (b) and $\| f -f_\infty \|_{L^2(f_\infty^{-1})}$ for various $\tau_0=10^2,\ldots, 10^6$ (in log scale).}}
\label{fig:3.1}
\end{figure}

\begin{figure}[htbp]
\begin{tabular}{cc}
\includegraphics[width=8.cm,angle=0]{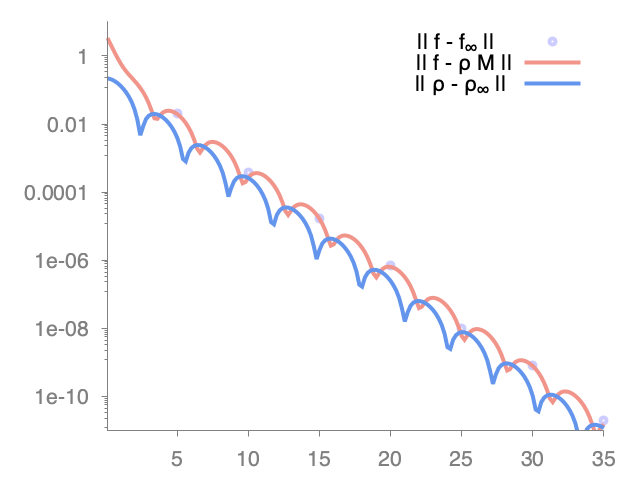}&
\includegraphics[width=8.cm,angle=0]{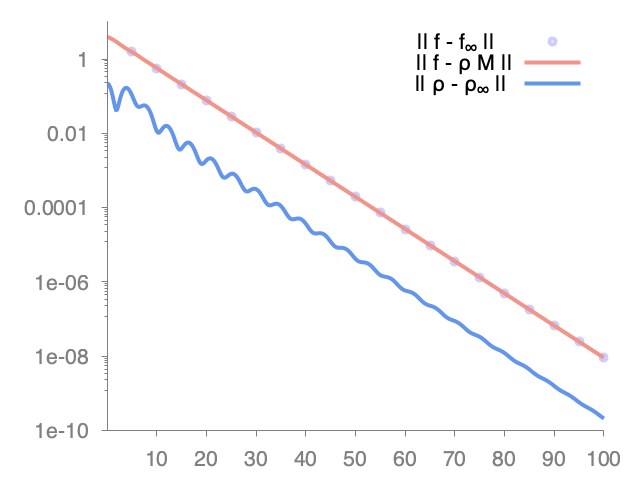}
\\ (a) $\tau_0=1$ & (b) $\tau_0=10$
\\
 \includegraphics[width=8.cm,angle=0]{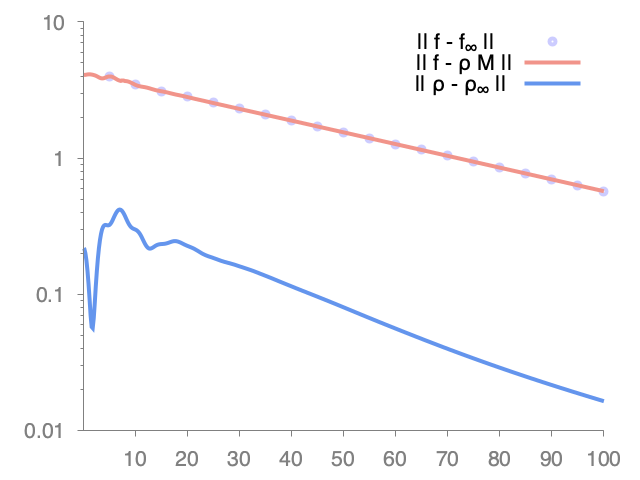}&
\includegraphics[width=8.cm,angle=0]{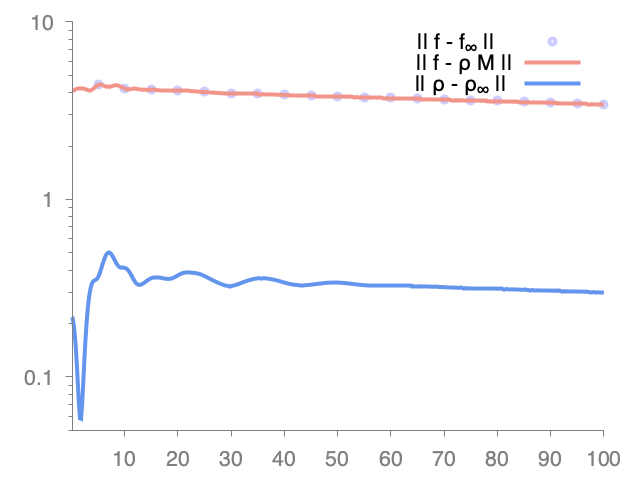}\\ 
(c) $\tau_0=100$ & (d) $\tau_0=1000$ 
\end{tabular}
\caption{ {\it Two-stream : time development of $\| f -f_\infty\|_{L^2(f_\infty^{-1})}$, $\| f -\rho\, \cM \|_{L^2(f_\infty^{-1})}$ and $\| \rho -\rho_\infty\|_{L^2(\rho_\infty^{-1})}$ for various $\tau_0$ (in log scale).}}
\label{fig:3.2}
\end{figure}

\begin{figure}[htbp]
\begin{tabular}{ccc}
\includegraphics[width=5.cm,angle=0]{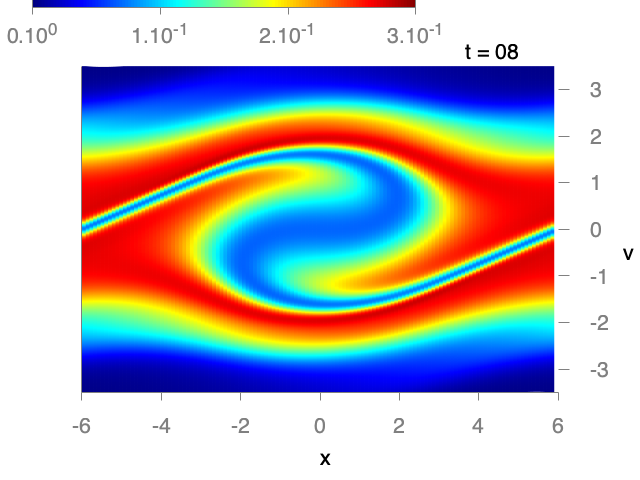} &
\includegraphics[width=5.cm,angle=0]{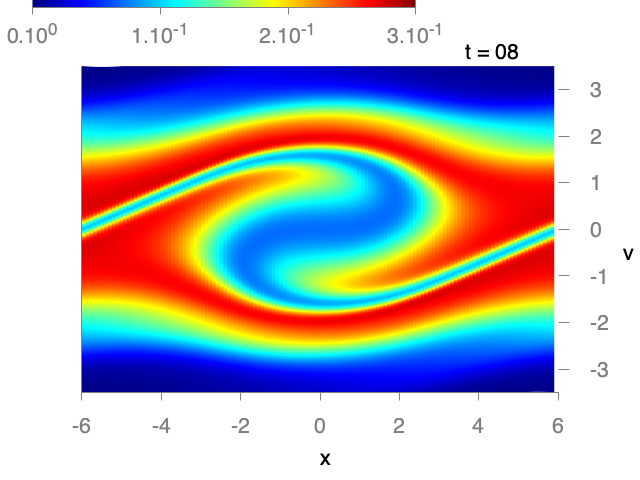} &
\includegraphics[width=5.cm,angle=0]{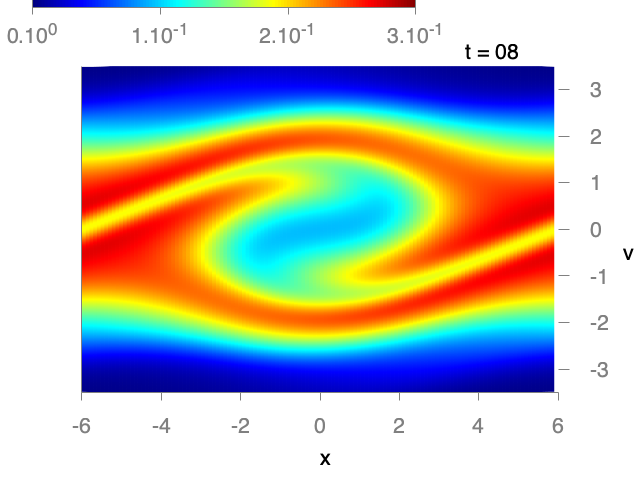}\\
\includegraphics[width=5.cm,angle=0]{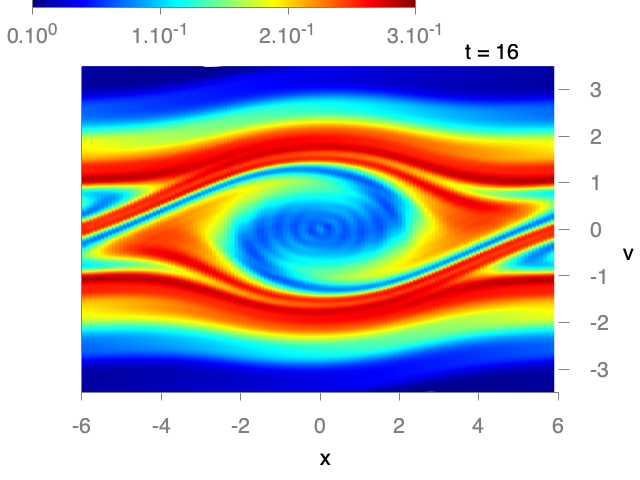} &
\includegraphics[width=5.cm,angle=0]{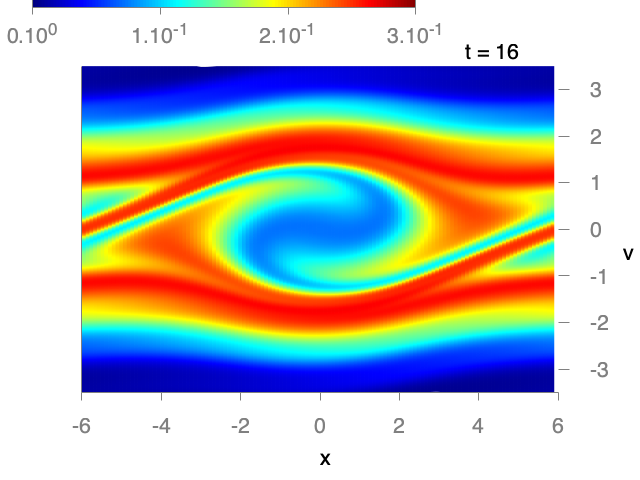} &
\includegraphics[width=5.cm,angle=0]{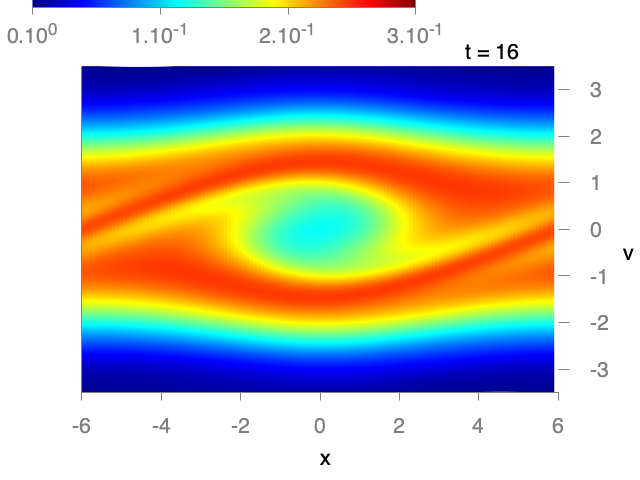}
\\
\includegraphics[width=5.cm,angle=0]{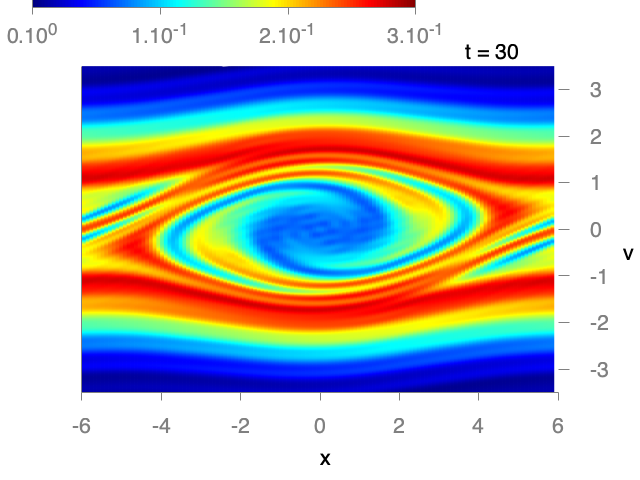} &
\includegraphics[width=5.cm,angle=0]{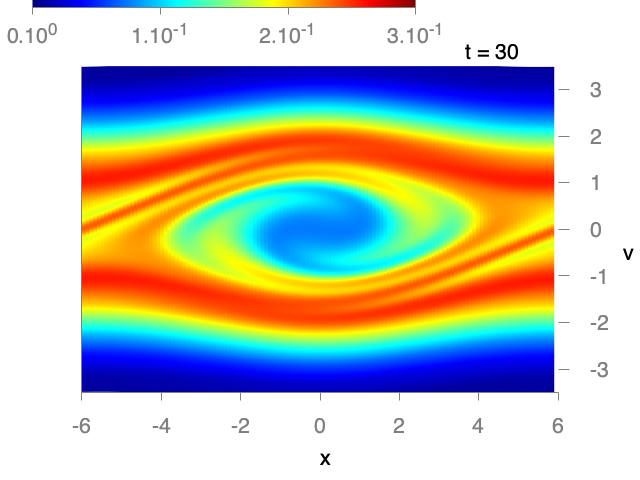} &
\includegraphics[width=5.cm,angle=0]{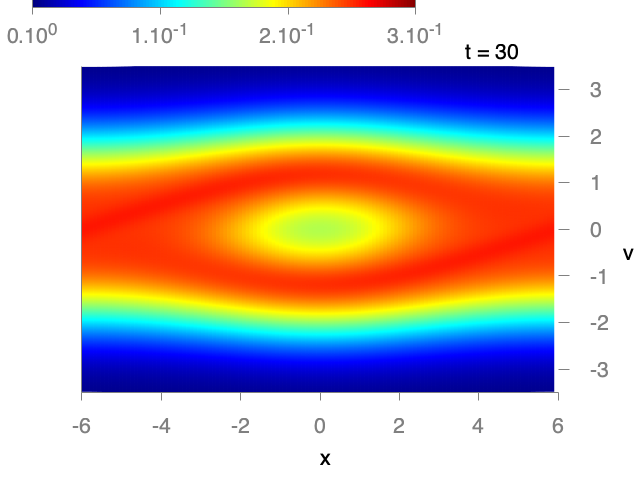}
\\
\includegraphics[width=5.cm,angle=0]{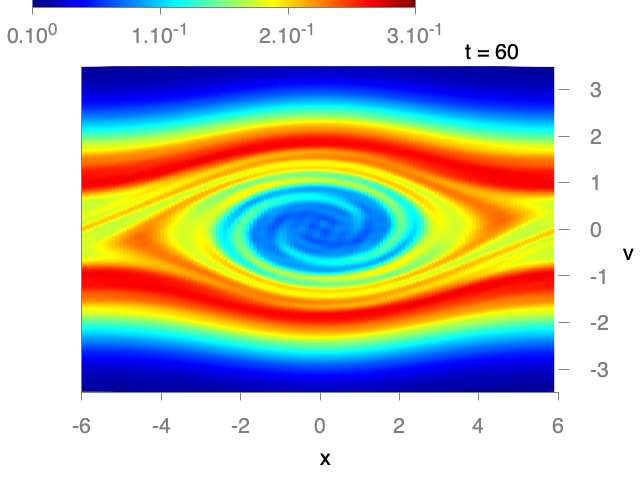} &
\includegraphics[width=5.cm,angle=0]{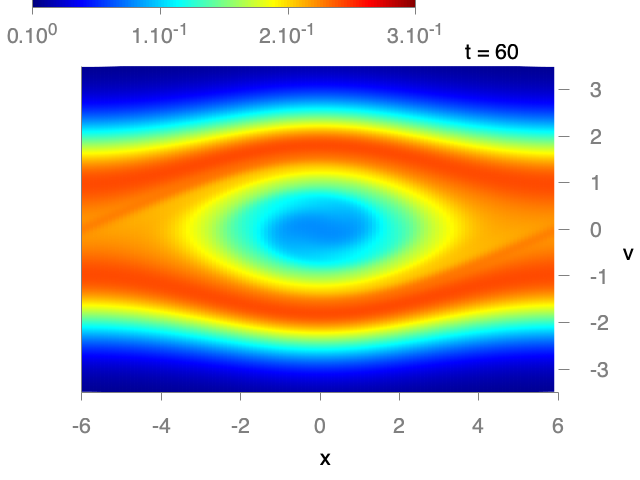} &
\includegraphics[width=5.cm,angle=0]{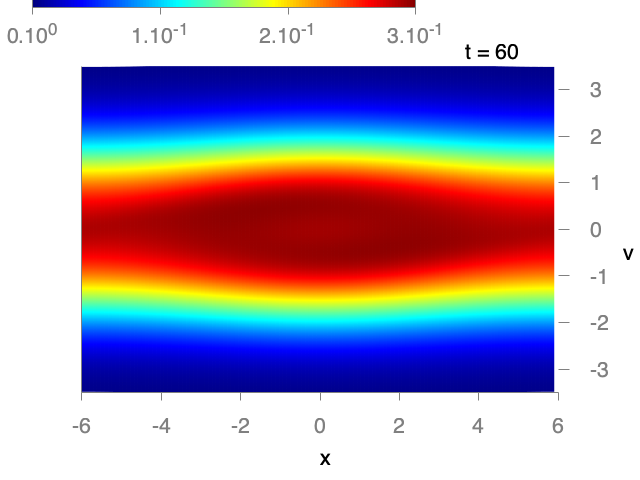}
  \\
  (a) $\tau_0=10^4$ & (b) $\tau_0=10^3$ & (c) $\tau_0=10^2$ 
\end{tabular}
\caption{ {\it Two-stream : snapshots of the distribution function $f$ at time $t=8$, $16$, $30$ and $60$ for various $\tau_0$.}}
\label{fig:3.3}
\end{figure}

\section{Conclusion and perspectives}
\label{sec:5}
\setcounter{equation}{0}
\setcounter{figure}{0}
\setcounter{table}{0}

In this work, we proposed a numerical scheme for the Vlasov-Poisson-Fokker-Planck model. On the one hand, we proved that our method is asymptotic preserving in the long time regime for the linearized model. To do so, we derived the exponential relaxation of the numerical solution towards its equilibrium with rates independent of scaling and discretization parameters. On the other hand, we tested the robustness of the method in various numerical experiments. These experiments show the accuracy of our method in both weakly collisional regime where small scales of the system are captured, allowing to reproduce filamentation, vortex formation as well as fine nonlinear phenomena such as plasma echoes but also in strong collisional regime, where we witness exponential trend to equilibrium, as predicted by our analysis of the linearized model.\\
Many interesting perspectives arise from this work. On the theoretical
view point, an important continuation consists in extending our
theoretical results, which apply for a linear coupling with the
Poisson equation, to the nonlinear scheme
\eqref{discrete:step1}-\eqref{discrete:step2} by proving its
asymptotic preserving properties and exponential trend towards
equilibrium of discrete solutions. This might be doable in a
perturbative setting by controlling the nonlinear contribution using
discrete Sobolev inequalities. Carrying such proof in higher
dimensions $d=2,3$ would be a great challenge and would probably
require new theoretical tools. Indeed, equivalent studies on the
continuous model in the literature rely on propagation of regularity
\cite{Herau_Thomann16,Hwang_Jang13,liu18, Herda_Rodrigues}. In our case it would require to propagate regularity at the discrete level. The groundwork towards such result has been laid in \cite{BF_09_22}, where we propagated discrete $H^1$ norms in the linear setting.\\
{Another important continuation of this work is to incorporate nonlinear collisions to the model. Let us first observe that in \cite{FN:22}, the Hermite spectral  method is applied to a
nonlinear Fokker-Planck operator  conserving mass, momentum and
energy. However, extending our analysis of the longtime regime at the discrete level to this case may require modifications and further investigations have to be done. $L^2$-hypocoercivity methods have been applied in the case of nonlinear BGK and linearized Boltzmann operators at the continuous level \cite{Herau06,Franz_Anton_Eric18}, however such analysis at the discrete level is not available in the literature in the framework of Hermite decomposition.}\\
Regarding simulations, the study of echoes also raises interesting perspectives. In \cite{Grenier_Toan_Rodnianski22} were constructed theoretical solutions to the Vlasov-Poisson equation which display infinite cascades of echoes and for which Landau damping is therefore not verified. Furthermore, sharp joint conditions on the collision frequency and the size of the initial data were obtained in order to ensure suppression of these echoes in \cite{Bedrossian17,Chaturvedi_Luk_Nguyen23}. Constructing such numerical solutions and illustrating the threshold obtained in these analysis would be of great interest. Another possible continuation would consist in finding a non-homogeneous background configuration where damping phenomena occur as in the homogeneous case, and then construct an experiment where nonlinear effect play the main role, even for small perturbation as for plasma echoes.

%%%%%%%%%%%%%%%%%%%%%%%%%%%%%%%%%%%
%
%%%%%%%%%%%%%%%%%%%%%%%%%%%%%%%%%%%
\section*{Acknowledgement}

Both authors are partially funded by the ANR Project Muffin
(ANR-19-CE46-0004). This work has been initiated during the semester
``Frontiers in kinetic theory: connecting microscopic to macroscopic
scales `` at the Isaac Newton Institute for Mathematical Sciences,
Cambridge, UK. 

\bibliographystyle{abbrv}
\bibliography{refer}
\end{document}